\tikzstyle{none}=[inner sep=0pt]
\tikzstyle{nfilled}=[circle,draw=black,fill=white,inner sep=0pt,minimum size=2mm]
\tikzstyle{filled}=[circle,draw=black,fill=black,inner sep=0pt,minimum size=2mm]
\title{Fictitious domain method with boundary value correction using penalty-free Nitsche method}
\abstract{In this paper, we consider a fictitious domain approach
  based on a Nitsche type method without penalty. To allow for high
  order approximation using piecewise affine approximation of the
  geometry we use a boundary value correction technique based on
  Taylor expansion from the approximate to the physical boundary. To
  ensure stability of the method a ghost penalty stabilization is
  considered in the boundary zone. We prove optimal error estimates in
  the $H^1$-norm and estimates suboptimal by $\mathcal{O}(h^{\frac12})$ in the
  $L^2$-norm. The suboptimality is due to the lack of adjoint
  consistency of our formulation. Numerical results are provided to corroborate the theoretical study.}
\keywords{Nitsche's method, fictitious domain method, boundary value correction}
\begin{document}

\section{Introduction}
Mesh generation is an important challenge in computational mechanics,
in fact for complex geometries this can be highly nontrivial. In some
cases for time dependent problems, such as a solid body embedded in a
flow, the geometry of the problem changes each time step imposing
conintuous remeshing, at least locally. The main idea of the
fictitious domain method \cite{Girault_1995_a, Girault_1999_a,
  Hansbo_2002_a, Angot_2005_a, Haslinger_2009_a, Burman_2010_b,
  Burman_2012_a, Massing_2014_a} is to relax the constraint that
imposes the mesh to fit with the computational domain. In fact the
principle is to embed the computational domain in a mesh that is easy
to generate, without matching the elements with the boundary. In the
early developments of fictitious domain \cite{Girault_1995_a}, the
method was faced with the choice of either integrating the equations
over the whole computational mesh including the nonphysical part or
only integrate inside the physical domain. In the first case, the
method is robust but inaccurate, the second approach is accurate but
can generate bad conditioning of the system matrix depending on how
the boundary crosses the mesh. As a fix to solve the conditioning
problem a boundary penalty term was introduced in \cite{Burman_2010_a}
the effect of this term is that it extends the stability in the
physical domain to the whole mesh domain, provided the distance from
the mesh boundary to the physical boundary is $\mathcal{O}(h)$.

Nitsche's method was first introduced for the weak imposition of the boundary conditions in \cite{Nitsche_1971_a} and
designed to be consistent and preserve the symmetry of the original
problem. The stability of the method relies on a penalty term that
needs to be sufficiently large. In the context of fictitious domain methods
Nitsche's method can suffer from instability for certain mesh boundary
configurations. A solution to this problem using the ghost penalty
approach was suggested in \cite{Burman_2012_a, Massing_2014_a}.

 In \cite{Freund_1995_a} a
non-symmetric version was proposed where the penalty parameter only
needs to be strictly greater than zero for the stability to be
ensured.
The possibility of considering the penalty parameter equal to
zero for the non-symmetric case was suggested in \cite{Hughes_2000_a},
however, coercivity cannot be proved for this non-symmetric
penalty-free method and stability was not established. In
\cite{Burman_2012_b} the stability of the nonsymmetric Nitsche's
method without penalty for elliptic problems was proved, using an
inf-sup argument, drawing on earlier work on discontinuous Galerkin
methods \cite{LN04}. Recently the work on penalty free methods has
been extended to compressible and incompressible elasticity in
\cite{Boiveau_2015_a}. The penalty-free method can be seen as a
Lagrange multiplier method where the Lagrange multipliers has been
replaced by the the boundary fluxes of the discrete elliptic
operator. In multiphysics problems and particularly for
fluid-structure interaction the loose coupling of this method appears to have some advantages that has been observed numerically in \cite{Burman_2014_c}.

We consider a cut finite element method (CutFEM) \cite{Burman_2014_g}
in the fictitious domain fashion, the implementation of this method
often requires an approximation of the physical domain due to the
boundary that can arbitrarily cut through the elements of the mesh. In
this paper we propose a method to control the error introduced by this
approximation of the physical domain. We follow the method that has
been developed in \cite{Burman_2015_a} where a piecewise affine
approximate geometry was used for the integration of the equation with a
correction based on Taylor expansion from the approximate to the
physical boundary to improve order when higher polynomial orders are
used. In this work we use the penalty-free Nitsche's method to impose the boundary conditions.
This eliminates one penalty parameter at the price of loss of symmetry
of the algebraic system and $\mathcal{O}(h^{\frac12})$ suboptimality in
the $L^2$-norm. We believe that the method nevertheless may be of
interest, in
particular for problems that are not symmetric, such as the
advection--diffusion equation. We present and analyse the method in
the two-dimensional case, but the results hold also in the three
dimensional case.

We end this section by introducing our model problem. Let $\Omega$ be a bounded domain in $\mathbb{R}^2$ with smooth boundary $\Gamma$ and exterior unit normal $n$. The Poisson problem is given by:
\begin{equation}
\begin{aligned}
\label{poisson}
-\Delta u&=f\quad\text{in}~\Omega,\nonumber\\
u&=g\quad\text{on}~\Gamma,
\end{aligned}
\end{equation}
where $f\in L^2(\Omega)$ the given body force and $g\in H^{\frac32}(\Omega)$ the boundary condition. The following regularity estimate holds
\begin{equation}
\label{regularity}
\|u\|_{H^{s+2}(\Omega)}\lesssim \|f\|_{H^s(\Omega)}~~~,~~~~~~s\geq -1.
\end{equation}
In this paper $C$ will be used as a generic positive constant that may
change at each occurrence, we use the notation $a\lesssim b$ for
$a\leq C b$. We will also use $a \sim b$ to denote $a \lesssim b$ and
$b \lesssim a$.
\section{Preliminaries}
\label{preliminaries}
Let $\left\{\mathcal{T}_h\right\}_h$ be a family of quasi-uniform and shape regular triangulations. In a generic sense a node of the triangulation is designated by $x_i$, $K$ denotes a triangle of $\mathcal{T}_h$ and $F$ denotes a face of a triangle $K$.
$h_K=\mbox{diam}(K)$ is the diameter of $K$ and $h=\mbox{max}_{K\in\mathcal{T}_h}h_K$ the mesh parameter for a given triangulation $\mathcal{T}_h$. $\mathbb{P}_p(K)$ defines the space of polynomials of degree less than or equal to $p$ on the element $K$, $\Omega_\mathcal{T}$ is the domain covered by the mesh $\mathcal{T}_h$, let us introduce the following finite element space
$$V_h^p=\left\{v\in H^1(\Omega_{\mathcal{T}}):v|_K\in\mathbb{P}_p(K)~~\forall K\in\mathcal{T}_h\right\}.$$
For simplicity we will write the $L^2$-norm on a domain $\Theta$, $\|\cdot\|_{L^2(\Theta)}$ as $\|\cdot\|_\Theta$.
The domain $\Omega$ is embedded in a mesh $\mathcal{T}_h$. Figure \ref{domain_mesh} gives an example of a simple configuration.
\begin{figure}[h!]
\begin{center}
\begin{tikzpicture}[scale=0.7]
\draw [fill=white,white] (-0.4,-0.4) rectangle (0.4,0.4); 

\draw  [fill=black, fill opacity=0.2] (0,0) circle (3.1cm);
       
\draw [black] (0,0)node{\LARGE{$\Omega$}};
	\begin{pgfonlayer}{nodelayer}
		\node [style=none] (0) at (-4, 4) {};
		\node [style=none] (1) at (4, 4) {};
		\node [style=none] (2) at (4, -4) {};
		\node [style=none] (3) at (-4, -4) {};
		\node [style=none] (4) at (-3.1, 0) {};
		\node [style=none] (5) at (0, 3.1) {};
		\node [style=none] (6) at (3.1, 0) {};
		\node [style=none] (7) at (0, -3.1) {};
		\node [style=none] (8) at (-3, 4) {};
		\node [style=none] (9) at (-2, 4) {};
		\node [style=none] (10) at (-1, 4) {};
		\node [style=none] (11) at (0, 4) {};
		\node [style=none] (12) at (1, 4) {};
		\node [style=none] (13) at (2, 4) {};
		\node [style=none] (14) at (3, 4) {};
		\node [style=none] (15) at (4, 3) {};
		\node [style=none] (16) at (4, 2) {};
		\node [style=none] (17) at (4, 1) {};
		\node [style=none] (18) at (4, 0) {};
		\node [style=none] (19) at (4, -1) {};
		\node [style=none] (20) at (4, -2) {};
		\node [style=none] (21) at (4, -3) {};
		\node [style=none] (22) at (3, -4) {};
		\node [style=none] (23) at (2, -4) {};
		\node [style=none] (24) at (1, -4) {};
		\node [style=none] (25) at (0, -4) {};
		\node [style=none] (26) at (-1, -4) {};
		\node [style=none] (27) at (-2, -4) {};
		\node [style=none] (28) at (-3, -4) {};
		\node [style=none] (29) at (-4, -3) {};
		\node [style=none] (30) at (-4, -2) {};
		\node [style=none] (31) at (-4, -1) {};
		\node [style=none] (32) at (-4, 0) {};
		\node [style=none] (33) at (-4, 1) {};
		\node [style=none] (34) at (-4, 2) {};
		\node [style=none] (35) at (-4, 3) {};
	\end{pgfonlayer}
	\begin{pgfonlayer}{edgelayer}
		\draw (0.center) to (3.center);
		\draw (3.center) to (2.center);
		\draw (2.center) to (1.center);
		\draw (0.center) to (1.center);
		\draw [style=circle, bend right=45, black, very thick] (4.center) to (7.center);
		\draw [style=circle, bend right=45, black, very thick] (5.center) to (4.center);
		\draw [style=circle, bend left=45, black, very thick] (5.center) to (6.center);
		\draw [style=circle, bend left=45, black, very thick] (6.center) to (7.center);
		\draw [style=circle] (35.center) to (8.center);
		\draw [style=circle] (34.center) to (9.center);
		\draw [style=circle] (33.center) to (10.center);
		\draw [style=circle] (32.center) to (11.center);
		\draw [style=circle] (31.center) to (12.center);
		\draw [style=circle] (30.center) to (13.center);
		\draw [style=circle] (29.center) to (14.center);
		\draw [style=circle] (1.center) to (3.center);
		\draw [style=circle] (28.center) to (15.center);
		\draw [style=circle] (16.center) to (27.center);
		\draw [style=circle] (26.center) to (17.center);
		\draw [style=circle] (18.center) to (25.center);
		\draw [style=circle] (24.center) to (19.center);
		\draw [style=circle] (20.center) to (23.center);
		\draw [style=circle] (22.center) to (21.center);
		\draw [style=circle] (14.center) to (22.center);
		\draw [style=circle] (13.center) to (23.center);
		\draw [style=circle] (24.center) to (12.center);
		\draw [style=circle] (11.center) to (25.center);
		\draw [style=circle] (26.center) to (10.center);
		\draw [style=circle] (28.center) to (8.center);
		\draw [style=circle] (9.center) to (27.center);
		\draw [style=circle] (35.center) to (15.center);
		\draw [style=circle] (16.center) to (34.center);
		\draw [style=circle] (33.center) to (17.center);
		\draw [style=circle] (32.center) to (18.center);
		\draw [style=circle] (31.center) to (19.center);
		\draw [style=circle] (30.center) to (20.center);
		\draw [style=circle] (29.center) to (21.center);
	\end{pgfonlayer}
\end{tikzpicture}
\end{center}
\caption{Domain $\Omega$ embedded in a background mesh $\mathcal{T}_h$.}
\label{domain_mesh}
\end{figure}
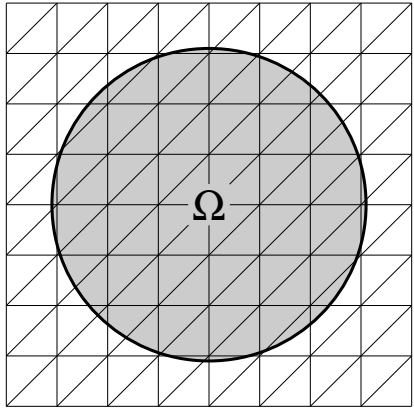
We define $\rho$ as the signed distance function, negative on the inside and positive on the outside of $\Omega$. The tubular neighbourhood of $\Gamma$ is defined as $U_\delta(\Gamma)=\{x\in\mathbb{R}^2:|\rho(x)|<\delta\}$. We consider a constant $\delta_0>0$ such that the closest point mapping $p(x):U_{\delta_0}(\Gamma)\rightarrow\Gamma$ is well defined and we have the identity $p(x)=x-\rho(x)n(p(x))$. We suppose that $\delta_0$ is chosen small enough such that $p(x)$ is a bijection.
Let the polygonal domain $\Omega_h$ with boundary $\Gamma_h$, be a
domain approximating $\Omega$. For simplicity we will assume that the
discrete domain is defined by the zero levelset of the nodal
interpolant of $\rho$ on $V_h^1$. Then on a triangle $K$ cut by the boundary, $\Gamma_h$
restricted to $K$ is a straight line. The discrete normal $n_h$
denotes the exterior unit normal to $\Gamma_h$. Observe that $n_h$ is constant on $\Gamma_h|_K$. We define $p_h(x,\varsigma)=x+\varsigma n_h(x)$, the function $\varrho_h$ is defined by $\varrho_h:\Gamma_h\longrightarrow\mathbb{R}$ such that $p_h(x,\varrho_h(x))\in \Gamma$ for all $x\in\Gamma_h$, for simplicity let $p_h(x)=p_h(x,\varrho_h(x))$. Observe that $\varrho_h$ is well defined for $h$ small enough (see \cite{Burman_2015_a}). We assume that $p_h(x,\varsigma)\in U_{\delta_0}(\Omega)$ for all $x\in \Gamma_h$ and all $\varsigma$ between $0$ and $\varrho_h(x)$. 
By our definition of $\Omega_h$ we have
\begin{equation}
\label{deltahdef}
\delta_h=\|\varrho_h\|_{L^{\infty}(\Gamma_h)}=\mathcal{O}(h^2).
\end{equation}
Let $\omega$ be a subset of $\Omega_{\mathcal{T}}$, we define
$$\mathcal{K}_h(\omega)=\left\{K\in\mathcal{T}_h~|~\overline{K}\cap\overline{\omega}\neq \emptyset\right\},\quad\quad\mathcal{N}_h(\omega)=\cup_{K\in\mathcal{K}_h(\omega)}K,$$
and the norm
$$\|v\|_{\mathcal{N}_h(\omega)}=\left(\sum_{K\in\mathcal{K}_h(\omega)}\|v\|_{K}^2\right)^\frac12.$$
We will use the notations
$$
\begin{aligned}
\mathcal{K}_h&=\mathcal{K}_h(\Omega\cup\Omega_h),\\
\mathcal{K}_\Gamma&=\mathcal{K}_h(\Gamma)\cup\mathcal{K}_h(\Gamma_h),\\
\mathcal{N}_h&=\mathcal{N}_h(\Omega\cup\Omega_h).
\end{aligned}
$$
We now recall the following trace inequalities for $v\in H^1(\mathcal{N}_h)$
\begin{align}
\label{trace}
\|v\|_{\partial K}&\lesssim (h_K^{-\frac12}\|v\|_{K}+h_K^{\frac12}\|\nabla v\|_{K})~~~~~\forall K\in\mathcal{K}_h,\\
\label{trace2}
\|v\|_{K\cap\Gamma}&\lesssim (h_K^{-\frac12}\|v\|_{K}+h_K^{\frac12}\|\nabla v\|_{K})~~~~~\forall K\in\mathcal{K}_h.
\end{align}
Inequality (\ref{trace2}) has been shown in \cite{Hansbo_2002_a}, we note that this is also true if we consider $\Gamma_h$ instead of $\Gamma$. For $v_h\in V_h^p$ the following inverse estimate holds
\begin{equation}
\label{inverse}
\|\nabla v_h\|_{K}\lesssim h_K^{-1}\|v_h\|_{K}~~~~~\forall K\in\mathcal{K}_h.
\end{equation}
The following inequality has been shown in \cite{Burman_2015_a} for all $v\in H^1(\Omega_h)$
\begin{equation}
\label{bound1}
\|v\|_{\Omega_h\backslash\Omega}^2\lesssim\delta_h^2\|\nabla v\cdot n\|_{\Omega_h\backslash\Omega}^2+\delta_h\|v\|_{\Gamma_h}^2.
\end{equation}
The ghost penalty \cite{Burman_2010_a} is introduced to ensure the well conditioning of the system matrix, it also provides the control of the gradient in case of small cut elements
$$J_h(u_h,v_h)=\gamma_g\sum_{F\in\mathcal{F}_G}\sum_{l=1}^{p}h^{2l-1}\langle \llbracket D_{n_F}^l u_h\rrbracket_F,\llbracket D_{n_F}^l v_h\rrbracket_F\rangle_F,$$
with $\mathcal{F}_G=\left\{F\in \mathcal{K}_\Gamma~|~F\cap(\Omega\cup\Omega_h)\neq \emptyset\right\},$ $\gamma_g$ the ghost penalty parameter and $n_F$ the unit normal to the face $F$ with fixed but arbitrary orientation. $D^l_{n_F}$ is the partial derivative of order $l$ in the direction $n_F$ and $\llbracket w \rrbracket_F=w_F^+-w_F^-$, with $w_F^{\pm}=\text{lim}_{s\rightarrow 0^+}w(x\mp sn_F)$ is the jump across a face $F$.
The following estimate has been shown in \cite{Massing_2014_a} for all $v_h\in V_h^p$
\begin{equation}
\label{lower_ghost}
\|\nabla v_h\|_{\mathcal{N}_h}^2\lesssim\|\nabla v_h\|_{\Omega_h}^2+J_h(v_h,v_h)\lesssim\|\nabla v_h\|_{\mathcal{N}_h}^2.
\end{equation}
We now construct an interpolation operator $\pi_h$. Let $\mathbb{E}$ be an $H^s$-extension on $\mathcal{N}_h$, $\mathbb{E} : H^s(\Omega)\rightarrow H^s(\mathcal{N}_h)$ such that for all $w\in$ $(\mathbb{E}w)|_{\Omega}=w$ and
\begin{equation}
\label{ineq_extension}
\|\mathbb{E} w\|_{H^s(\mathcal{N}_h)}\lesssim\|w\|_{H^s(\Omega)}\quad\quad\forall w\in H^s(\Omega),s\geq 0.
\end{equation}
For simplicity we will write $w$ instead of $\mathbb{E}w$. Let $\pi_h^* : H^s(\mathcal{N}_h)\rightarrow V_h^p$ be the Lagrange interpolant, we construct the interpolation operator $\pi_h$ such that
\begin{equation}
\label{def_interpol}
\pi_h u=\pi_h^*\mathbb{E} u.
\end{equation}
We have the interpolation estimate for $0\leq r \leq s \leq p+1$,
\begin{equation}
\label{interpolation_estimate_lagrange}
\|u-\pi_h^*u\|_{H^r(K)}\lesssim h^{s-r} \left|u\right|_{H^{s}(K)}~~~~~~~~~\forall K\in \mathcal{K}_h.
\end{equation}
Using the estimate (\ref{ineq_extension}) together with (\ref{interpolation_estimate_lagrange}) we have
\begin{equation}
\label{interpolation_estimate}
\|u-\pi_hu\|_{H^r(\mathcal{N}_h)}\lesssim h^{s-r} \left|u\right|_{H^{s}(\Omega)}.
\end{equation}
Let us introduce the norms
$$
\begin{aligned}
\vvvert w \vvvert_h^2&=\|\nabla w\|_{\Omega_h}^2+h^{-1}\|w\|_{\Gamma_h}^2+J_h(w,w),\\
\| w \|_*^2&=\vvvert w \vvvert_h^2+h\|\nabla w\cdot n_h\|_{\Gamma_h}^2+h^{-1}\|T_{1,k}(w)\|_{\Gamma_h}^2,
\end{aligned}
$$
with the Taylor expansion defined such that
\begin{equation}
\label{taylor_exp_definition}
T_{m,k}(u)(x)=\sum_{i=m}^{k}\frac{D_{n_h}^iu(x)}{i!}\varrho_h^i(x),
\end{equation}
$D_{n_h}^i$ is the derivative of order $i$ in the direction $n_h$. Using the estimate \eqref{interpolation_estimate} combined with the trace inequality \eqref{trace} it is straightforward to show
\begin{equation}
\label{estimate_new_norms}
\vvvert u-\pi_hu\vvvert_h\leq \| u-\pi_hu\|_*\lesssim h^p\left|u\right|_{H^{p+1}(\Omega)}.
\end{equation}

\section{Finite element formulation}
Here we use the boundary value correction approach from \cite{Burman_2015_a}, we write the extensions of $f$ and $u$ respectively as $f=\mathbb{E}f$ and $u=\mathbb{E}u$.
$$
\begin{aligned}
(f,v)_{\Omega_h}
&=(f+\Delta u, v)_{\Omega_h}-(\Delta u, v)_{\Omega_h}\\
&=(f+\Delta u, v)_{\Omega_h\backslash\Omega}+(\nabla u, \nabla v)_{\Omega_h}-\langle\nabla u \cdot n_h, v \rangle_{\Gamma_h}.
\end{aligned}
$$
We know that $f+\Delta u=0$ on $\Omega$. On $\Omega_h\backslash\Omega$ we have $f+\Delta u=\mathbb{E}f+\Delta\mathbb{E}u\neq 0$. Let us enforce weakly the boundary condition $u=g$ on $\Gamma$ by adding a consistent boundary term
$$
\begin{aligned}
(f,v)_{\Omega_h}=(f+\Delta u, v)_{\Omega_h\backslash\Omega}+(\nabla u, \nabla v)_{\Omega_h}-&\langle\nabla u \cdot n_h, v \rangle_{\Gamma_h}\\
&+\langle \nabla v \cdot n_h, u\circ p_h-g\circ p_h\rangle_{\Gamma_h}.
\end{aligned}
$$
Remark that this is equivalent to the penalty-free Nitsche's method \cite{Burman_2012_b,Boiveau_2015_a}. However, we cannot access $u\circ p_h$, so we use a Taylor approximation in the direction $n_h$ \eqref{taylor_exp_definition}
\begin{equation}
u\circ p_h(x)\approx T_{0,k}(u)(x).
\end{equation}
We note that in \eqref{taylor_exp_definition} we could replace $n_h$ by $n\circ p$ and $\varrho_h$ by $\varrho$ if these quantities were available (as mentioned in \cite{Burman_2015_a}).
Adding and subtracting the Taylor expansion in the Nitsche antisymmetric term and rearranging we obtain
\begin{equation}
\begin{split}
\label{weakequation}
(\nabla u, \nabla v)_{\Omega_h}&-\langle\nabla u \cdot n_h, v \rangle_{\Gamma_h}+\langle \nabla v \cdot n_h, T_{0,k}(u)\rangle_{\Gamma_h}+(f+\Delta u, v)_{\Omega_h\backslash\Omega}\\
&+\langle \nabla v \cdot n_h, u\circ p_h-T_{0,k}(u)\rangle_{\Gamma_h}=(f,v)_{\Omega_h}+\langle \nabla v \cdot n_h, g\circ p_h\rangle_{\Gamma_h}.
\end{split}
\end{equation}
The discrete formulation is obtained by dropping the terms $(f+\Delta u, v)_{\Omega_h\backslash\Omega}$ and $\langle \nabla v \cdot n_h, u\circ p_h-T_{0,k}(u)\rangle_{\Gamma_h}$. Find $u_h\in V_h^p$
\begin{equation}
\label{formulation}
A_h(u_h,v_h)+J_h(u_h,v_h)=L_h(v_h)~~~~~~\forall v_h \in V_h^p,
\end{equation}
with the linear forms
$$
\begin{aligned}
A_h(u_h,v_h)&=(\nabla u_h, \nabla v_h)_{\Omega_h}-\langle\nabla u_h \cdot n_h, v_h \rangle_{\Gamma_h}+\langle \nabla v_h \cdot n_h, T_{0,k}(u_h)\rangle_{\Gamma_h},\\
L_h(v_h)&=(f,v_h)_{\Omega_h}+\langle \nabla v_h \cdot n_h, g\circ p_h\rangle_{\Gamma_h}.
\end{aligned}
$$
Using the definition of the Taylor expansion, the bilinear form $A_h$ can be written as
$$
\begin{aligned}
A_h(u_h,v_h)=(\nabla u_h, \nabla v_h)_{\Omega_h}-\langle\nabla u_h \cdot n_h, v_h \rangle_{\Gamma_h}+&\langle \nabla v_h \cdot n_h, u_h\rangle_{\Gamma_h}\\
&+\langle \nabla v_h \cdot n_h, T_{1,k}(u_h)\rangle_{\Gamma_h}.
\end{aligned}
$$
The terms that has been dropped in the discrete formulation are defined as
\begin{equation*}
\label{defB}
B_h(u,v_h)=(f+\Delta u, v_h)_{\Omega_h\backslash\Omega}+\langle \nabla v_h \cdot n_h, u\circ p_h-T_{0,k}(u)\rangle_{\Gamma_h}\quad\forall v_h\in V_h^p.
\end{equation*}
In Section \ref{sec:infsup} we show an inf-sup condition for the discrete formulation \eqref{formulation}. In Section \ref{sec:bdvaluecorrection} the high order terms of $B_h(u,v_h)$ are treated. Section \ref{sec:errorestimate} presents the error estimates.

\section{Inf-sup condition on $\Omega_h$}
\label{sec:infsup}
We assume that $\Omega_h$ is defined by the zero level set of the
nodal interpolant $\mathcal{I}_h \rho$ of the distance function $\rho$. We also
assume that $h$ is small enough so that a band of elements in $\mathcal{K}_h(\Gamma_h)$ 
is in the tubular $U_{\delta_0}(\Gamma)$ and that in every $K \in
\mathcal{K}_h(\Gamma_h)$ there holds

\[
\|\rho - \mathcal{I}_h \rho\|_{L^\infty(K)} + h \|\nabla (\rho - \mathcal{I}_h
\rho)\|_{L^\infty(K)} \leq c_\rho h
\]
where the constant $c_\rho$ only depends on the regularity of the
interface
and, since $\rho$ is a distance function, for $x\in \mathcal{K}_h(\Gamma_h)$ we have
\begin{equation}
\label{bound_grad_dist_f}
1\leq |\nabla \rho(x) |\leq C_1,
\end{equation}
with $C_1>1$ a constant of order $1$. We now introduce boundary patches that will be useful for the upcoming
inf-sup analysis. 
Let us consider the set $\mathcal{K}_h(\Gamma_h)$ and split it into
$N_p$ smaller disjoint sets of elements $\mathcal{K}_j$ with
$j=1,\dots,N_p$, then we define 
\[
P_j = \mathcal{K}_j \cup\{K \in \mathcal{T}_h : K \cap \Omega_h \neq \emptyset,\exists K' \in \mathcal{K}_j \mbox{ such
  that } K \cap K' \ne \emptyset\}.
\]
This means that $P_j$ consists of the elements on $\mathcal{K}_j$ and
its neighbours that intersect $\Omega_h$ (we assume here that the mesh is truncated beyond
$\mathcal{K}_h(\Gamma_h)$ so that there are no exterior neighbours,
otherwise it is straightforward to handle them separately). Observe
that the patches $P_j$ overlap. For each patch $P_j$ we define the faces $F_j^1$ and $F_j^2$ where $\partial P_j\cap \Gamma_h\neq \emptyset$. We define the interior elements of the patch by 
\[
\mathcal{K}_j^\circ:= \{ K \in \mathcal{K}_h(\Gamma_h)\cap P_j: K \cap (F_j^1\cup F_j^2)= \emptyset \}.
\]
Let $I_{P_j}$ be the set of vertices $\{x_i\}$
in the patch $P_j$ and the cardinality of $I_{P_j}$ is $N_{P_j}$. We define the set of mesh vertices $I_j$ that are in the interior
 of the patch $P_j$ or on the outer boundary,
$$
I_j=\left\{x_i\in K: K \in \mathcal{K}_j^\circ\right\},
$$
Figure \ref{patch} shows an example of a patch. Let $\Gamma_j=\Gamma_h \cap \mathcal{K}_j$ denote the part of the boundary included in the patch $P_j$, for all $j$, the patch $P_j$ has the following properties
\begin{equation}
\label{order_patch}
\mbox{meas}_1(\Gamma_j)\sim h ~~~~\text{and}~~~~\mbox{meas}_2(P_j)\sim h^2.
\end{equation}
In \eqref{order_patch} we can control the constant in both relations
by choosing the patches to contain more elements (but uniformly under
refinement).

The function $v_\Gamma$ is defined such that $v_\Gamma=\sum_{j=1}^{N_p}v_j$, for each patch $P_j$, the function $v_j$ has the form $v_j=\zeta\tilde{\varphi}_j$ with $\zeta\in\mathbb{R}$. Let $\tilde{\varphi}_j\in V_h^1$ be defined for each node $x_i\in\mathcal{T}_h$ such that
\begin{displaymath}
\tilde{\varphi}_j(x_i)=
\left\{
\begin{array}{rllr}
0&\text{for}& x_i \in I_{P_j} \setminus I_j\\
-\rho(x_i) &\text{for}& x_i \in I_j,
\end{array}\right.
\end{displaymath}
with $i=1,\dots,N_{P_j}$.
\begin{figure}[h!]
\begin{center}
\begin{tikzpicture}[scale=0.8]

%
%
%
%
%
%

\draw [black] (-0.7,-1.3) node{$K_1$};
\draw [black] (-1.5,-0.5) node{$K_2$};
\draw [black] (-0.8,0.7) node{$K_3$};
\draw [black] (0.6,1.6) node{$K_4$};
\draw [black] (1.3,0.6) node{$K_5$};
\draw [black] (2.6,1.3) node{$K_6$};
\draw [black] (3.4,2.7) node{$K_7$};

	\begin{pgfonlayer}{nodelayer}
		\node [style=nfilled] (0) at (-2, -2) {};
		\node [style=filled] (1) at (-2, 0) {};
		\node [style=filled] (2) at (0, 2) {};
		\node [style=filled] (3) at (2, 2) {};
		\node [style=filled] (4) at (2, 0) {};
		\node [style=filled] (5) at (0, 0) {};
		\node [style=nfilled] (6) at (0, -2) {};
		\node [style=nfilled] (7) at (4, 4) {};
		\node [style=nfilled] (8) at (4, 2) {};
		\node [style=nfilled] (9) at (4, 0) {};
		\node [style=none] (10) at (-1.85, -2) {};
		\node [style=none] (11) at (4, 2.3) {};
		\node [style=nfilled] (12) at (2, -2) {};
		\node [style=none] (13) at (-1.80, -1.8) {};
		\node [style=none] (14) at (-0.62, 0) {};
		\node [style=none] (15) at (0, 0.62) {};
		\node [style=none] (16) at (1.8, 1.8) {};
		\node [style=none] (17) at (2, 1.88) {};
		\node [style=none] (18) at (2.45, 2) {};
		\node [style=none] (19) at (4, 2.28) {};
	\end{pgfonlayer}
	\begin{pgfonlayer}{edgelayer}
		\draw [style=none] (0) to (5);
		\draw [style=none] (1) to (2);
		\draw [style=none] (5) to (3);
		\draw [style=none] (6) to (4);
		\draw [style=none] (4) to (8);
		\draw [style=none] (3) to (7);
		\draw [style=none] (2) to (3);
		\draw [style=none] (1) to (0);
		\draw [style=none] (0) to (6);
		\draw [style=none] (5) to (6);
		\draw [style=none] (1) to (5);
		\draw [style=none] (2) to (5);
		\draw [style=none] (5) to (4);
		\draw [style=none] (3) to (4);
		\draw [style=none] (3) to (8);
		\draw [style=none] (7) to (8);
		\draw [style=none] (8) to (9);
		\draw [style=none] (4) to (9);
		\draw [style=none, black, dashed, thick]  (10) to (13);
		\draw [style=none, black, dashed, thick]  (13) to (14);
		\draw [style=none, black, dashed, thick]  (14) to (15);
		\draw [style=none, black, dashed, thick]  (15) to (16);
		\draw [style=none, black, dashed, thick]  (16) to (17);
		\draw [style=none, black, dashed, thick]  (17) to (18);
		\draw [style=none, black, dashed, thick]  (18) to (19);
		\draw [style=none] (6) to (12);
		\draw [style=none] (4) to (12);
		\draw [style=none] (12) to (9);
	\end{pgfonlayer}
\end{tikzpicture}
\end{center}
\caption{Example of a patch $P_j$, in this case $K_3\cup K_4\cup K_5=\mathcal{K}_j=\mathcal{K}_j^\circ$, $\tilde{\varphi}_j$ is equal to zero on the nonfilled nodes.}
\label{patch}
\end{figure}
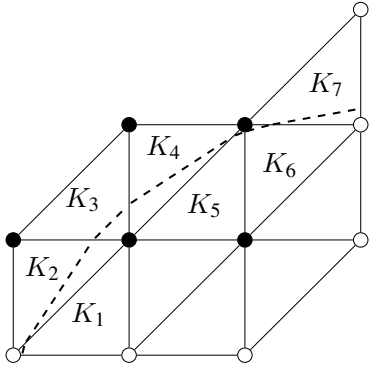
By the Poincar\'e inequality on a patch $P_j$ the following inequality holds
\begin{equation}
\label{poincare}
\|v_j\|_{P_j}\lesssim h\|\nabla v_j\|_{P_j}.
\end{equation}
\begin{lemma}
\label{keyineq_lemma}
For every patch $P_j$ with $1\leq j \leq N_p$ ; $\forall r_j\in\mathbb{R}$ there exists $v_j\in V_h^p$ such that
\begin{equation}
\label{keyineq_assump}
\textup{meas}(\Gamma_j)^{-1}\int_{\Gamma_j} \nabla v_j\cdot n_h ~\textup{d}s=r_j,
\end{equation}
and the following property holds
\begin{equation}
\label{keyineq_ineq}
\|\nabla v_j\|_{P_j}\lesssim\|h^\frac12 r_j\|_{\Gamma_j}.
\end{equation}
\end{lemma}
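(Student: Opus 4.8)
The plan is to take the explicit candidate $v_j=\zeta\tilde\varphi_j$ introduced just before the statement, with the scalar $\zeta\in\mathbb{R}$ fixed by the constraint \eqref{keyineq_assump}, and then to show that the resulting $\zeta$ is controlled by $r_j$ while $\|\nabla\tilde\varphi_j\|_{P_j}\lesssim h$. Since $v_j$ depends linearly on the single parameter $\zeta$, the affine condition \eqref{keyineq_assump} reduces to a scalar equation, so the whole argument hinges on computing the average flux of $\tilde\varphi_j$ and showing it is nondegenerate.

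First I would evaluate that average flux. On every interior element $K\in\mathcal{K}_j^\circ$ all vertices belong to $I_j$, so by the nodal definition $\tilde\varphi_j|_K=-\mathcal{I}_h\rho|_K$ and hence $\nabla\tilde\varphi_j=-\nabla\mathcal{I}_h\rho$ is constant on $K$. Because $\Gamma_h$ is the zero level set of $\mathcal{I}_h\rho$, on $\Gamma_h|_K$ the discrete normal satisfies $n_h=\nabla\mathcal{I}_h\rho/|\nabla\mathcal{I}_h\rho|$, so that $\nabla\tilde\varphi_j\cdot n_h=-|\nabla\mathcal{I}_h\rho|$ all along $\Gamma_j$. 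Combining the bound \eqref{bound_grad_dist_f} with the interpolation estimate $h\|\nabla(\rho-\mathcal{I}_h\rho)\|_{L^\infty(K)}\leq c_\rho h$ for the distance function gives, for $h$ small enough, $|\nabla\mathcal{I}_h\rho|\sim 1$ on each such element. Hence $\mu_j:=-\mathrm{meas}(\Gamma_j)^{-1}\int_{\Gamma_j}\nabla\tilde\varphi_j\cdot n_h\,\mathrm{d}s=\mathrm{meas}(\Gamma_j)^{-1}\int_{\Gamma_j}|\nabla\mathcal{I}_h\rho|\,\mathrm{d}s$ is bounded above and below by positive constants of order one, uniformly in $j$ and under refinement. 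Choosing $\zeta=-r_j/\mu_j$ then realises \eqref{keyineq_assump} and yields $|\zeta|\lesssim|r_j|$.

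It remains to bound $\|\nabla\tilde\varphi_j\|_{P_j}$. Every node of the patch lies within distance $\mathcal{O}(h)$ of $\Gamma_h$, which by \eqref{deltahdef} lies within $\mathcal{O}(h^2)$ of $\Gamma$; since $\rho$ is the distance to $\Gamma$ this gives $|\rho(x_i)|\lesssim h$ at the nodes where $\tilde\varphi_j=-\rho$, while $\tilde\varphi_j=0$ at the remaining nodes, so $\|\tilde\varphi_j\|_{L^\infty(P_j)}\lesssim h$. The inverse estimate \eqref{inverse} then gives $\|\nabla\tilde\varphi_j\|_{L^\infty(P_j)}\lesssim 1$, and with $\mathrm{meas}_2(P_j)\sim h^2$ from \eqref{order_patch} we obtain $\|\nabla\tilde\varphi_j\|_{P_j}\leq\|\nabla\tilde\varphi_j\|_{L^\infty(P_j)}\,\mathrm{meas}_2(P_j)^{1/2}\lesssim h$. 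Putting the pieces together, $\|\nabla v_j\|_{P_j}=|\zeta|\,\|\nabla\tilde\varphi_j\|_{P_j}\lesssim|r_j|\,h$, which matches the right-hand side since $\|h^{1/2}r_j\|_{\Gamma_j}=h^{1/2}|r_j|\,\mathrm{meas}(\Gamma_j)^{1/2}\sim h|r_j|$ by \eqref{order_patch}, giving \eqref{keyineq_ineq}.

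The main obstacle is the nondegeneracy of $\mu_j$: establishing that the average flux is bounded away from zero uniformly. This is exactly where the geometric hypotheses enter, namely that $\tilde\varphi_j$ coincides with $-\mathcal{I}_h\rho$ on the elements cut by $\Gamma_j$, that $n_h$ is aligned with $\nabla\mathcal{I}_h\rho$ because $\Gamma_h$ is a level set of $\mathcal{I}_h\rho$, and that $|\nabla\rho|\geq 1$ so that the interpolated gradient stays nondegenerate. I would also need to verify that the core elements carrying $\Gamma_j$ are indeed interior elements of the patch, i.e. those not touching $F_j^1\cup F_j^2$, so that the clean identity $\nabla\tilde\varphi_j\cdot n_h=-|\nabla\mathcal{I}_h\rho|$ holds along the whole of $\Gamma_j$; this is precisely what the definitions of $\mathcal{K}_j^\circ$ and $I_j$ are designed to guarantee.
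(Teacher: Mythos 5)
Your strategy is essentially the paper's: the same candidate $v_j=\zeta\tilde{\varphi}_j$, reduction of \eqref{keyineq_assump} to a scalar equation for $\zeta$, the identity $\nabla\tilde{\varphi}_j\cdot n_h=-|\nabla\mathcal{I}_h\rho|$ on the interior cut elements, and the bound $\|\nabla\tilde{\varphi}_j\|_{P_j}\lesssim h$ (your $L^\infty$--plus--inverse-estimate route to the latter is a clean alternative to the paper's $L^2$ splitting, and your sign bookkeeping is actually more careful than the paper's). But there is a genuine gap in the nondegeneracy of your $\mu_j$, which is the heart of the lemma. You assert that the clean identity holds along the \emph{whole} of $\Gamma_j$ and that ``this is precisely what the definitions of $\mathcal{K}_j^\circ$ and $I_j$ are designed to guarantee.'' It is not: $\Gamma_j=\Gamma_h\cap\mathcal{K}_j$ necessarily runs through the first and last cut triangles of the patch, which meet $F_j^1\cup F_j^2$ and hence lie outside $\mathcal{K}_j^\circ$; on those triangles some vertices belong to $I_{P_j}\setminus I_j$, where $\tilde{\varphi}_j$ is set to $0$ rather than $-\rho(x_i)$, so $\tilde{\varphi}_j\neq-\mathcal{I}_h\rho$ there and $\nabla\tilde{\varphi}_j\cdot n_h$ can have either sign. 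The portion of $\Gamma_j$ inside those end elements has length $\mathcal{O}(h)$ --- the \emph{same} order as $\mathrm{meas}(\Gamma_j)$ itself --- so its contribution to the average flux is not automatically negligible and could a priori destroy the uniform lower bound on $\mu_j$.

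The paper's proof is devoted almost entirely to closing exactly this hole: it isolates the end elements, bounds their flux contribution by $c_\partial h$ using trace and inverse inequalities together with the smallness of $\mathcal{I}_h\rho$ on $\partial\mathcal{K}_j^\circ$, and then exploits the freedom in the constants of \eqref{order_patch} --- the patches must be chosen to contain enough cut elements that the good (interior) contribution, of size at least $(1-c_\rho h)(\mathrm{meas}(\Gamma_j)-2h)$, dominates the $\mathcal{O}(h)$ bad one, yielding $|\Xi_j|\geq\tfrac14$. Without this quantitative step (or some substitute for it), your claim that $\mu_j$ is bounded away from zero uniformly in $j$ and under refinement is unsupported; everything else in your argument goes through.
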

\begin{proof}
The functions $v_j$ and $\tilde{\varphi}_j$ are defined as previously described, let
$$\Xi_j=\text{meas}(\Gamma_j)^{-1}\int_{\Gamma_j}\nabla
\tilde{\varphi}_j\cdot n_h~\text{d}s.$$
We will first prove that for shape regular meshes, $\Xi_j$ is strictly
negative and bounded away from zero uniformly in $h$, provided the hidden constant of
the upper bound in \eqref{order_patch} is chosen large enough.
Let $K_1,\dots,K_m$ be the triangles crossed by $\Gamma_h$ within a
patch $P_j$, the numbering of the crossed elements is done in order
following a path of $\Gamma_j$ as in figure \ref{patch}. The assumption
\eqref{order_patch} says that the number of these triangles should be uniformly bounded
by some $m$. We will show that the upper bound $m$ only depends on the
shape regularity of the mesh. The mesh size on the other hand must be
small enough to resolve the boundary. The triangles $K_1$, $K_2$, $K_{m-1}$ and $K_m$ will have
nodes on the boundary of $\partial P_i$ where $\tilde \varphi=0$ (on $F_j^1$ or $F_j^2$). Let us merge $K_1$ and $K_2$ (resp. $K_{m-1}$ and $K_m$) into one quadrilateral element $K_{1}^\square$ (resp. $K_{m}^\square$). Observe that for the triangles $K_i$, $i=3,...,m-2$
there holds $K_i \in \mathcal{K}_j^\circ$ and with $\mathcal{I}_h$ denoting the standard nodal interpolant on
piecewise linear functions,
\[
(n_h \cdot \nabla  \tilde{\varphi}_j\vert_{K_i}) = |\nabla
\mathcal{I}_h \rho \vert_{K_i}|.
\]
On $K_{1}^\square$ and $K_{m}^\square$ the following upper bound holds
trivially
\[
(n_h \cdot \nabla  \tilde{\varphi}_j\vert_{K_i^\square}) < |\nabla
\tilde{\varphi}_j\vert_{K_i^\square}|, \quad i=1,m.
\]
Consider now 
\[
\int_{\Gamma_h \cap P_j} \nabla  \tilde{\varphi}_j\cdot
n_h~\text{d}s  = \sum_{i=1}^m
\int_{\Gamma_j\cap K_i} \nabla  \tilde{\varphi}_j\cdot
n_h~\text{d}s =  T_1^\square+T_m^\square+\sum_{i=3}^{m-2} T_i.
\]
For $i \in \{3,\hdots,m-2\}$ using \eqref{bound_grad_dist_f} we have
\begin{multline*}
T_i = |\nabla
\pi_h \rho \vert_{K_i}| \mbox{meas}(\Gamma_j\cap K_i) \ge
\mbox{meas}(\Gamma_j\cap K_i) (1- |\nabla
(1-\mathcal{I}_h) \rho \vert_{K_i}|) \\
\ge (1 - c_{\rho} h)  \mbox{meas}(\Gamma_j\cap K_i).
\end{multline*}
For $i \in \{1,m\}$ we have
\begin{multline*}
T_i^\square \leq h^{\frac12} \|\nabla \tilde \varphi_j\|_{F_j \cap K_i^\square} \leq
\|\nabla \tilde \varphi_j\|_{K_i^\square} \leq C_i h^{-1} \|\tilde
\varphi_j\|_{K_i^\square} 
\leq C_i h^{-1/2} \|\mathcal{I}_h \rho\|_{\partial K_i^\square \cap \partial
  \mathcal{K}_j^\circ}  \\\leq  C_i h^{-1/2} (\|\rho\|_{\partial K_i^\square \cap \partial
  \mathcal{K}_j^\circ} +\|\rho - \mathcal{I}_h \rho\|_{\partial K_i^\square \cap \partial
  \mathcal{K}_j^\circ} )
\leq c_\partial h.
\end{multline*}
The right hand side of these two inequalities depend only on 
the shape regularity of the mesh. Hence, using that $\mbox{meas}(\Gamma \cap
\mathcal{K}_j^\circ)\ge \mbox{meas}(\Gamma_j) -2h$
\begin{multline*}
\mbox{meas}(\Gamma_j)^{-1} \sum_{i=1}^m
\int_{\Gamma_j\cap K_i} \nabla  \tilde{\varphi}_j\cdot
n_h~\text{d}s\\ \ge  (1 - c_{\rho} h)  (1-2h \mbox{meas}(\Gamma_j)^{-1} )- 2  c_\partial h \mbox{meas}(\Gamma_j)^{-1}.
\end{multline*}
Assume now that $h$ is sufficiently small so that $ (1 - c_{\rho}
  h)>\frac12$ then
\[
\mbox{meas}(\Gamma_j)^{-1} \sum_{i=1}^m
\int_{\Gamma_j\cap K_i}\nabla  \tilde{\varphi}_j\cdot
n_h~\text{d}s \ge \frac12 - (2 c_\partial +1) h \mbox{meas}(\Gamma_j)^{-1}.
\]
If the lower constant of the left relation of \eqref{order_patch} is
larger than $4 (2 c_\partial +1)$ we may conclude that
\[
\Xi_j =\mbox{meas}(\Gamma_j)^{-1} \sum_{i=1}^m
\int_{\Gamma_j\cap K_i}\nabla  \tilde{\varphi}_j\cdot
n_h~\text{d}s \ge \frac14,
\]
which shows the uniform lower bound.
Note that the constant $c_\partial$ only depends on the
curvature of the boundary and the mesh geometry.
Thanks to this lower bound we may define the normalised function
$\varphi_j$ by
$$
\varphi_j=\Xi_j^{-1}\tilde{\varphi}_j.
$$
By definition there holds
\begin{equation}
\label{keyineq_proof_1}
\textup{meas}(\Gamma_j)^{-1}\int_{\Gamma_j}\nabla\varphi_j\cdot n_h~\textup{d}s=1,
\end{equation}
and
\begin{equation}
\|\nabla \varphi_j\|_{P_j} = \Xi_j^{-1} \|\nabla \tilde \varphi_j\|_{P_j}.
\end{equation}
The right hand side can be bounded as follows
\[
 \|\nabla \tilde \varphi_j\|_{P_j} \leq  \|\nabla \tilde
 \varphi_j\|_{\mathcal{K}_j^\circ} + \|\nabla \tilde
 \varphi_j\|_{P_j \setminus \mathcal{K}_j^\circ} = T_1 + T_2,
\]
\[
T_1 \leq  \|\nabla  (\pi_h \rho - \rho)\|_{\mathcal{K}_j^\circ} +
\|\nabla \rho \|_{\mathcal{K}_j^\circ} \leq C h^{\frac12} \mbox{meas}(\Gamma_j)^{\frac12}, 
\]
\begin{multline*}
T_2 \leq C_i h^{-1} \| \tilde
 \varphi_j\|_{P_i \setminus \mathcal{K}_j^\circ} \leq   C h^{-1/2} \|
  \mathcal{I}_h \rho\|_{\partial \mathcal{K}_j^\circ} \\
\leq C h^{-1/2}( \|
 \rho\|_{\partial \mathcal{K}_j^\circ}+ \|
 \mathcal{I}_h \rho - \rho\|_{\partial \mathcal{K}_j^\circ})
\leq C h^{\frac12} \mbox{meas}(\Gamma_j)^{\frac12}.
\end{multline*}
We conclude that
\begin{equation}
\label{keyineq_proof_2}
\|\nabla \varphi_j\|_{P_j} \leq  C h^{\frac12} \mbox{meas}(\Gamma_j).
\end{equation}
Let $v_j=r_j\varphi_j$, then condition (\ref{keyineq_assump}) is verified considering (\ref{keyineq_proof_1}). The upper bound (\ref{keyineq_ineq}) is obtained using (\ref{keyineq_proof_2}), (\ref{order_patch}) and
$$\|\nabla v_j\|_{P_j}= |r_j|\|\nabla \varphi_j\|_{P_j}\lesssim\text{meas}(\Gamma_j)^\frac12|r_j|h^\frac12=\|h^\frac12 r_j\|_{\Gamma_j}.$$
 \end{proof}
\begin{lemma}
\label{lowertriple}
For $u_h,v_h\in V_h^p$ with $v_h=u_h+\alpha v_\Gamma$, there exists a positive constant $\beta_0$ such that the following inequality holds
$$\beta_0\vvvert u_h \vvvert_{h}^2\leq A_h(u_h,v_h) + J_h(u_h,v_h).$$
\end{lemma}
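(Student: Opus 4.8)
The plan is to exploit the non-symmetric, penalty-free structure of $A_h$: testing with $u_h$ alone controls only $\|\nabla u_h\|_{\Omega_h}^2+J_h(u_h,u_h)$, so the missing boundary contribution $h^{-1}\|u_h\|_{\Gamma_h}^2$ in $\vvvert u_h\vvvert_h^2=\|\nabla u_h\|_{\Omega_h}^2+h^{-1}\|u_h\|_{\Gamma_h}^2+J_h(u_h,u_h)$ must be recovered from the patchwise corrector $\alpha v_\Gamma$. First I would split
\[
A_h(u_h,v_h)+J_h(u_h,v_h)=\big[A_h(u_h,u_h)+J_h(u_h,u_h)\big]+\alpha\big[A_h(u_h,v_\Gamma)+J_h(u_h,v_\Gamma)\big].
\]
For the diagonal part, the rewriting of $A_h$ with the Taylor term makes the antisymmetric boundary contributions cancel, leaving $A_h(u_h,u_h)+J_h(u_h,u_h)=\|\nabla u_h\|_{\Omega_h}^2+J_h(u_h,u_h)+\langle\nabla u_h\cdot n_h,T_{1,k}(u_h)\rangle_{\Gamma_h}$. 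Since $\delta_h=\mathcal{O}(h^2)$ by \eqref{deltahdef}, Cauchy--Schwarz together with \eqref{trace2}, \eqref{inverse} and \eqref{lower_ghost} shows the last term is higher order, $|\langle\nabla u_h\cdot n_h,T_{1,k}(u_h)\rangle_{\Gamma_h}|\lesssim h\,(\|\nabla u_h\|_{\Omega_h}^2+J_h(u_h,u_h))$, so for $h$ small the diagonal part is bounded below by $\tfrac12(\|\nabla u_h\|_{\Omega_h}^2+J_h(u_h,u_h))$.

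Next I would fix the corrector. On each patch I set $r_j:=h^{-1}\langle u_h\rangle_{\Gamma_j}$ with the patch average $\langle u_h\rangle_{\Gamma_j}:=\mathrm{meas}(\Gamma_j)^{-1}\int_{\Gamma_j}u_h\,\mathrm{d}s$, and let $v_j$ be the function furnished by Lemma~\ref{keyineq_lemma}, so that by \eqref{keyineq_assump} there holds $\int_{\Gamma_j}\nabla v_j\cdot n_h\,\mathrm{d}s=h^{-1}\int_{\Gamma_j}u_h\,\mathrm{d}s$ and, by \eqref{keyineq_ineq} and \eqref{order_patch}, $\|\nabla v_j\|_{P_j}\lesssim h^{-1/2}\|u_h\|_{\Gamma_j}$. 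Writing $v_\Gamma=\sum_j v_j$ and using the bounded overlap of the patches, the Poincar\'e inequality \eqref{poincare}, the trace inequality \eqref{trace2}, the inverse estimate \eqref{inverse} and the ghost-penalty equivalence \eqref{lower_ghost}, I obtain the stability bounds
\[
\|\nabla v_\Gamma\|_{\Omega_h}^2+J_h(v_\Gamma,v_\Gamma)\lesssim h^{-1}\|u_h\|_{\Gamma_h}^2,\qquad \|v_\Gamma\|_{\Gamma_h}^2\lesssim\|u_h\|_{\Gamma_h}^2,
\]
which is exactly the scaling needed to pair $v_\Gamma$ against $\|\nabla u_h\|_{\Omega_h}$ and $h^{-1}\|u_h\|_{\Gamma_h}^2$ via Young's inequality.

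The heart of the argument is the lower bound for the term $\langle\nabla v_\Gamma\cdot n_h,u_h\rangle_{\Gamma_h}$ arising in $A_h(u_h,v_\Gamma)$. Splitting $u_h=\langle u_h\rangle_{\Gamma_j}+\tilde u_h$ on each $\Gamma_j$ with $\tilde u_h$ of zero mean, the constant part reproduces $h^{-1}\|\langle u_h\rangle_{\Gamma_j}\|_{\Gamma_j}^2$ through the defining property \eqref{keyineq_assump}, while the oscillation $\tilde u_h$ is controlled by a one-dimensional Poincar\'e inequality along the curve $\Gamma_j$ (of length $\sim h$ by \eqref{order_patch}) followed by \eqref{trace2}, giving $\|\tilde u_h\|_{\Gamma_j}^2\lesssim h\|\nabla u_h\|_{P_j}^2$. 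Summation over patches then yields
\[
\langle\nabla v_\Gamma\cdot n_h,u_h\rangle_{\Gamma_h}\ \ge\ \tfrac12 h^{-1}\|u_h\|_{\Gamma_h}^2-C\big(\|\nabla u_h\|_{\Omega_h}^2+J_h(u_h,u_h)\big).
\]
I expect this to be the main obstacle: the oscillation remainder is only of the \emph{same} order as $\|\nabla u_h\|_{\Omega_h}^2$, not higher order, so it cannot be absorbed into the diagonal part directly and must instead be dominated by taking the coupling parameter $\alpha$ small. A secondary technical point is the bookkeeping of the overlap, since $\nabla v_\Gamma$ restricted to a given $\Gamma_j$ also receives contributions from neighbouring correctors; these are handled by the bounded-overlap property and absorbed into the same remainder.

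Finally I would estimate the remaining cross terms of $A_h(u_h,v_\Gamma)+J_h(u_h,v_\Gamma)$, namely $(\nabla u_h,\nabla v_\Gamma)_{\Omega_h}$, $\langle\nabla u_h\cdot n_h,v_\Gamma\rangle_{\Gamma_h}$, $J_h(u_h,v_\Gamma)$ and the higher-order $\langle\nabla v_\Gamma\cdot n_h,T_{1,k}(u_h)\rangle_{\Gamma_h}$, by Cauchy--Schwarz and the stability bounds above; under Young's inequality each splits into a small multiple of $h^{-1}\|u_h\|_{\Gamma_h}^2$ plus a constant multiple of $\|\nabla u_h\|_{\Omega_h}^2+J_h(u_h,u_h)$. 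Collecting everything gives
\[
A_h(u_h,v_h)+J_h(u_h,v_h)\ \ge\ \big(\tfrac12-\alpha C_2\big)\big(\|\nabla u_h\|_{\Omega_h}^2+J_h(u_h,u_h)\big)+\tfrac{\alpha}{4}\,h^{-1}\|u_h\|_{\Gamma_h}^2,
\]
and fixing $\alpha$ small enough that $\alpha C_2\le\tfrac14$ yields the claim with $\beta_0=\min(\tfrac14,\tfrac{\alpha}{4})$.
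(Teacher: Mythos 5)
Your proposal is correct and follows essentially the same route as the paper: the same split into a diagonal part (where the antisymmetric boundary terms cancel up to the higher-order $T_{1,k}$ contribution) plus the patchwise corrector, the same choice $r_j=h^{-1}\overline{u}^j$ fed into Lemma~\ref{keyineq_lemma}, the same mean-plus-oscillation decomposition of $u_h$ on each $\Gamma_j$ to recover $h^{-1}\|u_h\|_{\Gamma_h}^2$, and the same absorption of the remaining cross terms via Young's inequality before fixing $\alpha$ small. No substantive differences to report.
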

\begin{proof}
Using \eqref{lower_ghost} it is straightforward to obtain
\begin{equation*}
\begin{split}
(A_h+J_h)(u_h,u_h)=&~(\nabla u_h,\nabla u_h)_{\Omega_h}+\langle \nabla u_h \cdot n_h, T_{1,k}(u_h)\rangle_{\Gamma_h}+J_h(u_h,u_h)\\
&\gtrsim\|\nabla u_h\|_{\mathcal{N}_h}^2+\langle \nabla u_h \cdot n_h, T_{1,k}(u_h)\rangle_{\Gamma_h}.
\end{split}
\end{equation*}
We bound the second term using the trace inequality \eqref{trace2}, inverse inequality \eqref{inverse} and \eqref{deltahdef}
$$
\begin{aligned}
\langle \nabla u_h \cdot n_h, T_{1,k}(u_h)\rangle_{\Gamma_h}&\leq\|\nabla u_h \cdot n_h\|_{\Gamma_h}\|T_{1,k}(u_h)\|_{\Gamma_h}\\
&\lesssim h^{-1}\|\nabla u_h\|_{\mathcal{N}_h(\Gamma_h)}\|T_{1,k}(u_h)\|_{\mathcal{N}_h(\Gamma_h)}\\
&\lesssim\gamma(h)\|\nabla u_h\|_{\mathcal{N}_h(\Gamma_h)}^2,
\end{aligned}
$$
where $\gamma(h)=\sum_{i=1}^{k}\left(\frac{\delta_h}{h}\right)^i$, note that $\mathcal{O}(\gamma(h))=h$. Considering $v_\Gamma$ as defined in Section \ref{preliminaries}, for the $j^{th}$ patch $P_j$ we get
$$
\begin{aligned}
\alpha(A_h+J_h)(u_h,v_j)=\alpha&(\nabla u_h,\nabla v_j)_{P_j\cap \Omega_h}-\alpha\langle\nabla u_h \cdot n_h, v_j\rangle_{\Gamma_j}+\alpha J_h(u_h,v_j)\\
&+\alpha\langle\nabla v_j \cdot n_h, u_h\rangle_{\Gamma_j}+\alpha\langle \nabla v_j \cdot n_h, T_{1,k}(u_h)\rangle_{\Gamma_j},
\end{aligned}
$$
 applying Cauchy-Schwarz inequality together with \eqref{lower_ghost} we can write
$$
\begin{aligned}
\alpha(\nabla u_h&,\nabla v_j)_{P_j\cap \Omega_h}+\alpha J_h(u_h,v_j)\\
&\leq\alpha\|\nabla u_h\|_{P_j\cap \Omega_h}\|\nabla v_j\|_{P_j\cap \Omega_h}+\alpha J_h(u_h,u_h)^\frac12J_h(v_j,v_j)^\frac12,\\
&\leq\epsilon\| \nabla u_h\|_{P_j}^2+\frac{C\alpha^2}{4\epsilon}\| \nabla v_j\|_{P_j}^2.
\end{aligned}
$$
Using the trace inequality (\ref{trace2}), the inverse inequality (\ref{inverse}) and the inequality (\ref{poincare}) we can write the following
$$
\begin{aligned}
\alpha\langle\nabla u_h \cdot n_h, v_j\rangle_{\Gamma_j}
&\leq\alpha\|  \nabla u_h\cdot n_h\|_{\Gamma_j}\| v_j\|_{\Gamma_j}\lesssim\alpha h^{-1}\|  \nabla u_h\|_{P_j}\| v_j\|_{P_j}\\
&\lesssim\alpha\|  \nabla u_h\|_{P_j}\| \nabla v_j\|_{P_j}\leq\epsilon\| \nabla u_h\|_{P_j}^2+\frac{C\alpha^2}{4\epsilon}\| \nabla v_j\|_{P_j}^2.
\end{aligned}
$$
Let us consider the average $\overline{u}^j=\text{meas}(\Gamma_j)^{-1}\int_{\Gamma_j}u_h~\text{d}s$. Using Lemma \ref{keyineq_lemma} and choosing $r_j=h^{-1}\overline{u}^j$ we get the inequality
\begin{equation}
\label{keyineq}
\|\nabla v_j\|_{P_j}\lesssim \|h^{-\frac12}\overline{u}^j\|_{\Gamma_j}.
\end{equation}
Our choice of $r_j$ allows us to write
$$
\alpha\langle\nabla v_j \cdot n_h, u_h\rangle_{\Gamma_j}=\alpha\|h^{-\frac12}\overline{u}^j\|_{\Gamma_j}^2+\alpha\langle\nabla v_j \cdot n_h, u_h-\overline{u}^j\rangle_{\Gamma_j}.
$$
It is straightforward to observe
\begin{equation}
\label{stdapprox}
\|u_h-\overline{u}^j\|_{\Gamma_j}\leq Ch \|\nabla u_h \|_{\Gamma_j},
\end{equation}
combining this result with the trace and inverse inequalities, we can show
$$
\alpha\langle\nabla v_j \cdot n_h, u_h-\overline{u}^j\rangle_{\Gamma_j}\leq\epsilon\| \nabla u_h\|_{P_j}^2+\frac{C\alpha^2}{4\epsilon}\| \nabla v_j\|_{P_j}^2.
$$
Using \eqref{deltahdef} once again
$$
\alpha\langle \nabla v_j \cdot n_h, T_{1,k}(u_h)\rangle_{\Gamma_j}\leq\epsilon\gamma(h)^2\|\nabla u_h\|_{P_j}^2+\frac{C\alpha^2}{4\epsilon}\|\nabla v_j\|_{P_j}^2.
$$
Each term has been bounded, we can now get back to
\begin{equation*}
\begin{split}
(A_h+J_h)(u_h,v_h)\geq (C-\gamma(h))&\| \nabla u_h \|_{\mathcal{N}_h}^2+\alpha\sum_{j=1}^{N_p}\|h^{-\frac12}\overline{u}^j\|_{\Gamma_j}^2\\
&-4\epsilon\sum_{j=1}^{N_p}\| \nabla u_h\|_{P_j}^2-\frac{C\alpha^2}{\epsilon}\sum_{j=1}^{N_p}\| \nabla v_j\|_{P_j}^2,
\end{split}
\end{equation*}
using (\ref{keyineq}) and rearranging, we obtain
\begin{equation*}
\begin{split}
(A_h+J_h)(u_h,v_h)\geq (C-\gamma(h)-4\epsilon)\| \nabla u_h \|_{\mathcal{N}_h}^2+\alpha\Big(1-\frac{C\alpha}{\epsilon}\Big)\sum_{j=1}^{N_p}\|h^{-\frac12}\overline{u}^j\|_{\Gamma_j}^2.
\end{split}
\end{equation*}
Using (\ref{stdapprox}), the trace inequality and the inverse inequality we can show
\begin{equation*}
\|h^{-\frac12}\overline{u}^j\|_{\Gamma_j}^2\geq\|h^{-\frac12} u_h\|_{\Gamma_j}^2-C'\|\nabla u_h\|_{P_j}^2,
\end{equation*}
using this result together with \eqref{lower_ghost} we obtain
\begin{equation*}
\begin{split}
(A_h+J_h)(u_h,v_h)\geq&~(C-\gamma(h)-4\epsilon-C'\alpha)\left(\| \nabla u_h \|_{\Omega_h}^2+J_h(u_h,u_h)\right)\\&+\alpha\Big(1-\frac{C\alpha}{\epsilon}\Big)\sum_{j=1}^{N_p}\|h^{-\frac12}u_h\|_{\Gamma_j}^2.
\end{split}
\end{equation*}
It is easy to choose $\epsilon$ and $\alpha$ such that the two terms of this expression are positive, for example, by choosing $\epsilon=\frac1{16}$ we obtain $\alpha=\text{min}(\frac{C-\gamma(h)-\frac14}{C'},\frac{1}{16C})$.
 \end{proof}
\begin{theorem}
\label{infsup}
There exists a positive constant $\beta$ such that for all function $u_h\in V_h^p$ the following inequality holds
$$\beta\vvvert u_h \vvvert_{h}\leq\underset{v_h\in V_h^p}{\textup{sup}}\frac{A_h(u_h,v_h) + J_h(u_h,v_h)}{\vvvert v_h \vvvert_{h}}.$$
\end{theorem}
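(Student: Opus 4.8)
The plan is to feed the bilinear form the specific competitor $v_h = u_h + \alpha v_\Gamma$ constructed in Lemma \ref{lowertriple}, and then to show that this competitor is controlled by $u_h$ in the triple norm, so that it is an admissible element in the supremum. Indeed, Lemma \ref{lowertriple} already provides the pairwise lower bound $\beta_0 \vvvert u_h \vvvert_h^2 \le (A_h + J_h)(u_h, v_h)$, so that
\[
\beta_0 \vvvert u_h \vvvert_h^2 \le (A_h + J_h)(u_h, v_h) \le \left( \sup_{w_h \in V_h^p} \frac{(A_h+J_h)(u_h,w_h)}{\vvvert w_h \vvvert_h} \right) \vvvert v_h \vvvert_h .
\]
Consequently the entire theorem reduces to the single boundedness estimate $\vvvert v_h \vvvert_h \lesssim \vvvert u_h \vvvert_h$, after which one divides by $\vvvert u_h \vvvert_h$ (the case $u_h = 0$ being trivial) and sets $\beta = \beta_0/C$.

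The next step is therefore to establish $\vvvert v_\Gamma \vvvert_h \lesssim \vvvert u_h \vvvert_h$, since then the triangle inequality gives $\vvvert v_h \vvvert_h \le \vvvert u_h \vvvert_h + \alpha \vvvert v_\Gamma \vvvert_h \lesssim \vvvert u_h \vvvert_h$. Recall $v_\Gamma = \sum_j v_j$ with $v_j = r_j \varphi_j$ and $r_j = h^{-1}\overline{u}^j$, so Lemma \ref{keyineq_lemma} (in the form \eqref{keyineq}) yields $\|\nabla v_j\|_{P_j} \lesssim \|h^{-\frac12}\overline{u}^j\|_{\Gamma_j}$. Because the averaging operator is the $L^2(\Gamma_j)$-orthogonal projection onto constants we have $\|\overline{u}^j\|_{\Gamma_j} \le \|u_h\|_{\Gamma_j}$, and since the patches $P_j$ have uniformly bounded overlap, summing over $j$ produces $\sum_j \|\nabla v_j\|_{P_j}^2 \lesssim h^{-1}\|u_h\|_{\Gamma_h}^2 \le \vvvert u_h \vvvert_h^2$. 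This controls the gradient contribution $\|\nabla v_\Gamma\|_{\Omega_h}^2$.

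It then remains to dominate the other two pieces of $\vvvert \cdot \vvvert_h$ by $\sum_j \|\nabla v_j\|_{P_j}^2$. For the boundary term, the trace inequality \eqref{trace2}, the inverse inequality \eqref{inverse} and the Poincar\'e inequality \eqref{poincare} give $h^{-1}\|v_j\|_{\Gamma_j}^2 \lesssim \|\nabla v_j\|_{P_j}^2$; for the ghost-penalty term, the right-hand inequality in \eqref{lower_ghost} gives $J_h(v_\Gamma, v_\Gamma) \lesssim \|\nabla v_\Gamma\|_{\mathcal{N}_h}^2 \lesssim \sum_j \|\nabla v_j\|_{P_j}^2$, again using the finite overlap of the patches. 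Collecting the three estimates yields $\vvvert v_\Gamma \vvvert_h^2 \lesssim h^{-1}\|u_h\|_{\Gamma_h}^2 \le \vvvert u_h \vvvert_h^2$, which closes the argument. I expect the main obstacle to be precisely this boundedness estimate for $v_\Gamma$: one must carefully carry the finite-overlap bookkeeping across the overlapping patches and verify that the ghost-penalty and boundary parts of the triple norm are genuinely subordinate to $\|\nabla v_j\|_{P_j}$, which is the only quantity that Lemma \ref{keyineq_lemma} controls directly. Once $\vvvert v_h \vvvert_h \lesssim \vvvert u_h \vvvert_h$ is in hand, the inf-sup condition is an immediate consequence of Lemma \ref{lowertriple} through the displayed chain above.
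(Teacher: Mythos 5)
Your proposal is correct and follows essentially the same route as the paper: invoke Lemma \ref{lowertriple} with the competitor $v_h=u_h+\alpha v_\Gamma$, reduce the theorem to the boundedness estimate $\vvvert v_h\vvvert_h\lesssim\vvvert u_h\vvvert_h$, and establish that patchwise via \eqref{keyineq}, the trace/inverse/Poincar\'e inequalities for the boundary part and \eqref{lower_ghost} for the ghost penalty. Your treatment is in fact slightly more explicit than the paper's (the $L^2$-projection bound $\|\overline{u}^j\|_{\Gamma_j}\le\|u_h\|_{\Gamma_j}$ and the finite-overlap bookkeeping are only implicit there), but the argument is the same.
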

\begin{proof}
Considering Lemma \ref{lowertriple} the only thing to show is $\vvvert v_h \vvvert_{h} \lesssim \vvvert u_h \vvvert_{h}$,
$$\vvvert v_h \vvvert_{h} \leq\vvvert u_h \vvvert_{h} + \vvvert v_\Gamma \vvvert_{h}~~~\text{with}~~~~~\vvvert v_\Gamma \vvvert_{h}\leq\sum_{j=1}^{N_p}\vvvert v_j \vvvert_{h},$$
$$\vvvert v_j \vvvert_{h}^2=\| \nabla v_j\|_{P_j\cap\Omega_h}^2+\| h^{-\frac12} v_j\|_{\Gamma_j}^2+J_h(v_j,v_j).$$
Using the trace inequality together with (\ref{poincare}) and (\ref{keyineq}) we observe
$$\| h^{-\frac12} v_j\|_{\Gamma_j}^2\lesssim\| \nabla v_j\|_{P_j}^2\lesssim\|h^{-\frac12}\overline{u}^j\|_{\Gamma_j}\lesssim\|h^{-\frac12}u_h\|_{\Gamma_j}.$$
We conclude using \eqref{lower_ghost}.
 \end{proof}

\section{Boundary value correction}
\label{sec:bdvaluecorrection}
The goal of this section is bound the two high order terms that has been dropped in the finite element formulation \eqref{formulation}.
\begin{theorem}
Let $B_h$ be the bilinear form as defined in \eqref{defB} the following holds $\forall v_h\in V_h^p$
\label{bdcorrection}
$$B_h(u,v_h)\lesssim (h^{-\frac12}\delta_h^{k+1}\underset{0\leq t\leq\delta_0}{\textup{sup}}\|D^{k+1}u\|_{\Gamma_t}+ \delta_h^{l+1} \underset{0\leq t\leq\delta_0}{\textup{sup}} \|D_n^l(f+\Delta u)\|_{\Gamma_t})\vvvert v_h\vvvert_{h}.$$
\end{theorem}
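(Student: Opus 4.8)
The plan is to split $B_h(u,v_h)$ into its two constituents, treat each by Cauchy--Schwarz, and in both cases bound the test-function factor by $\vvvert v_h\vvvert_h$ using the trace, inverse and ghost-penalty inequalities, while bounding the data factor by a Taylor remainder that exploits the vanishing of the relevant quantity on $\Gamma$. Concretely, write
\[
B_h(u,v_h)=\underbrace{(f+\Delta u,\,v_h)_{\Omega_h\backslash\Omega}}_{=:I}+\underbrace{\langle \nabla v_h\cdot n_h,\,u\circ p_h-T_{0,k}(u)\rangle_{\Gamma_h}}_{=:II},
\]
and bound $I$ and $II$ separately, aiming at the second and first terms of the asserted estimate respectively.

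For the boundary term $II$, I would first observe that $u\circ p_h(x)-T_{0,k}(u)(x)$ is exactly the order-$k$ Taylor remainder of $s\mapsto u(x+s\,n_h(x))$ at $s=\varrho_h(x)$ about $s=0$, so in integral form $u\circ p_h(x)-T_{0,k}(u)(x)=\tfrac1{k!}\int_0^{\varrho_h(x)}(\varrho_h(x)-s)^k D_{n_h}^{k+1}u(x+s\,n_h)\,\textup{d}s$. Using $\|\varrho_h\|_{L^\infty(\Gamma_h)}=\delta_h$ from \eqref{deltahdef}, Cauchy--Schwarz in $s$ and then Fubini over $\Gamma_h\times[0,\delta_h]$ with a change of variables onto the level sets, I expect
\[
\|u\circ p_h-T_{0,k}(u)\|_{\Gamma_h}\lesssim \delta_h^{k+1}\,\underset{0\leq t\leq\delta_0}{\textup{sup}}\|D^{k+1}u\|_{\Gamma_t}.
\]
For the $v_h$-factor, applying the trace inequality \eqref{trace2} (valid also on $\Gamma_h$) componentwise to $\nabla v_h$, the inverse estimate \eqref{inverse} for the second derivatives, and \eqref{lower_ghost}, gives $\|\nabla v_h\cdot n_h\|_{\Gamma_h}\lesssim h^{-\frac12}\|\nabla v_h\|_{\mathcal{N}_h}\lesssim h^{-\frac12}\vvvert v_h\vvvert_h$; multiplying the two bounds yields exactly $h^{-\frac12}\delta_h^{k+1}\,\textup{sup}_t\|D^{k+1}u\|_{\Gamma_t}\,\vvvert v_h\vvvert_h$.

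For the volume term $I$, the key observation is that $w:=f+\Delta u=\mathbb{E}f+\Delta\mathbb{E}u$ vanishes identically on $\Omega$ (since $\mathbb{E}f=f$, $\mathbb{E}u=u$ and $-\Delta u=f$ there) and is smooth on $\mathcal{N}_h$, so all its normal derivatives vanish on $\Gamma$. Hence a Taylor expansion from $\Gamma$ of order $l$ leaves only the order-$l$ remainder, and the same integral-remainder plus coarea argument as above over the strip $\Omega_h\backslash\Omega\subset\{0<\rho\lesssim\delta_h\}$ gives $\|w\|_{\Omega_h\backslash\Omega}\lesssim \delta_h^{\,l+\frac12}\,\textup{sup}_t\|D_n^l w\|_{\Gamma_t}$. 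For the test function I would use \eqref{bound1}, \eqref{lower_ghost} and the $h^{-1}$-scaling of $\|\cdot\|_{\Gamma_h}$ inside $\vvvert\cdot\vvvert_h$ to get $\|v_h\|_{\Omega_h\backslash\Omega}\lesssim(\delta_h^2+\delta_h h)^{\frac12}\vvvert v_h\vvvert_h\lesssim \delta_h^{\frac12}h^{\frac12}\vvvert v_h\vvvert_h$, where $\delta_h\lesssim h$ is used. Multiplying, $I\lesssim \delta_h^{\,l+1}h^{\frac12}\,\textup{sup}_t\|D_n^l(f+\Delta u)\|_{\Gamma_t}\,\vvvert v_h\vvvert_h$, and $h^{\frac12}\lesssim 1$ absorbs the extra factor into the stated bound.

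The main obstacle I anticipate is making the coarea/Fubini step rigorous in both terms: one must pass from integrals of $|D^{k+1}u|^2$ (resp.\ $|D_n^l w|^2$) along the normal segments $x\mapsto x+s\,n_h(x)$ emanating from $\Gamma_h$ to the $L^2$-norms $\|\cdot\|_{\Gamma_t}$ over the exact level sets, which requires a change of variables whose Jacobian must be shown uniformly bounded for $\delta_0$ small and for shape-regular meshes. A secondary care point is that the expansions for $II$ run along the discrete normal $n_h$ while the level sets $\Gamma_t$ are defined through the exact distance; this is reconciled by noting that the segments still lie in $U_{\delta_0}(\Gamma)$, that $|D_{n_h}^{k+1}u|\leq|D^{k+1}u|$, and that taking the supremum over all $0\leq t\leq\delta_0$ absorbs the $\mathcal{O}(h)$ discrepancy between $n_h$ and $n$. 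Once these geometric comparisons are in place, the two estimates combine to give the claim.
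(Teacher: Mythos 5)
Your proposal is correct and follows essentially the same route as the paper: the same Cauchy--Schwarz splitting into the boundary and volume residual terms, the same integral-form Taylor remainder combined with a level-set/coarea argument to produce $\delta_h^{k+1}\sup_t\|D^{k+1}u\|_{\Gamma_t}$ and $\delta_h^{l+\frac12}\sup_t\|D_n^l(f+\Delta u)\|_{\Gamma_t}$, and the same trace/inverse and \eqref{bound1}-based bounds $\|\nabla v_h\cdot n_h\|_{\Gamma_h}\lesssim h^{-\frac12}\vvvert v_h\vvvert_h$ and $\|v_h\|_{\Omega_h\backslash\Omega}\lesssim\delta_h^{\frac12}\vvvert v_h\vvvert_h$ for the test-function factors. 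The only cosmetic difference is that you retain an extra harmless factor $h^{\frac12}$ in the volume term before absorbing it.
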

\begin{proof}
Using the Cauchy-Schwarz inequality
\begin{equation*}
|B_h(u,v_h)|\leq\|u\circ p_h(x)-T_{0,k}(u)(x)\|_{\Gamma_h}\|\nabla v_h \cdot n_h\|_{\Gamma_h}+\|f+\Delta u\|_{\Omega_h\backslash \Omega}\| v_h\|_{\Omega_h\backslash \Omega}.
\end{equation*}
By definition of the Taylor approximation we have
\begin{equation*}
|u\circ p_h(x)-T_{0,k}(u)(x)|=\Big|\int_0^{\varrho_h(x)}D_{n_h}^{k+1} u(x(s))(\varrho_h(x)-s)^k~\text{d}s\Big|.
\end{equation*}
Using the Cauchy Schwarz inequality
$$
\begin{aligned}
\|u\circ p_h(x)-T_{0,k}(u)(x)\|_{\Gamma_h}^2
&\leq\int_{\Gamma_h}\|D_{n_h}^{k+1}u\|_{I_x}^2\|(\varrho_h(x)-s)^k\|_{I_x}^2~\text{d}s\\
&\leq\int_{\Gamma_h}\|D_{n_h}^{k+1}u\|_{I_{\delta_h}}^2|\varrho_h(x)|^{2k+1}~\text{d}s\\
&\leq\delta_h^{2k+1}\|D^{k+1}u\|_{U_{\delta_h}(\Gamma_h)}^2\\
&\leq\delta_h^{2k+2}\underset{0\leq t\leq\delta_0}{\text{sup}}\|D^{k+1}u\|_{\Gamma_t}^2,
\end{aligned}
$$
where $\Gamma_t=\{ x\in \Omega~:~|\rho(x)|=t\}$ is the levelset with distance $t$ to the boundary $\Gamma$, following the approach from \cite{Burman_2015_a}. Suppose that
$$f+\Delta u \in H^l(U_{\delta_0}(\Omega)),$$
this property holds if $f\in H^l(\Omega)$ by applying \eqref{regularity} and \eqref{ineq_extension}. Using $\Omega_h\backslash \Omega\in U_\delta(\Gamma)$ and $\delta\sim \delta_h$ it follows that
\begin{equation*}
\|f+\Delta u\|_{\Omega_h\backslash \Omega}
\lesssim \delta_h^l \|D_n^l(f+\Delta u)\|_{\Omega_h\backslash \Omega}\lesssim \delta_h^{l+\frac12} \underset{0\leq t\leq\delta_0}{\text{sup}} \|D_n^l(f+\Delta u)\|_{\Gamma_t},
\end{equation*}
Then we can bound the bilinear form $B_h$, using $\|v\|_{\Omega_h\backslash \Omega}\lesssim\delta_h^{1/2}
\vvvert v \vvvert_h$ (deduced from \eqref{bound1}) the trace and inverse inequalities $\forall v_h\in V_h^p$
\begin{equation}
\begin{split}
\label{B_for_L2}
|&B_h(u,v_h)|\\
&\lesssim\delta_h^{k+1}\underset{0\leq t\leq\delta_0}{\text{sup}}\|D^{k+1}u\|_{\Gamma_t}\|\nabla v_h \cdot n_h\|_{\Gamma_h}+ \delta_h^{l+\frac12} \underset{0\leq t\leq\delta_0}{\text{sup}} \|D_n^l(f+\Delta u)\|_{\Gamma_t}\| v_h\|_{\Omega_h\backslash \Omega}\\
&\lesssim(h^{-\frac12}\delta_h^{k+1}\underset{0\leq t\leq\delta_0}{\text{sup}}\|D^{k+1}u\|_{\Gamma_t}+ \delta_h^{l+1} \underset{0\leq t\leq\delta_0}{\text{sup}} \|D_n^l(f+\Delta u)\|_{\Gamma_t})\vvvert v_h\vvvert_{h}.
\end{split}
\end{equation}
 \end{proof}

\section{A priori error estimate}
\label{sec:errorestimate}
The formulation (\ref{formulation}) satisfies the following consistency relation (Galerkin orthogonality).
\begin{lemma}
\label{galerkin}
Let $u_h\in V_h^p$ be the solution of (\ref{formulation}) and $u\in H^2(\mathcal{N}_h)$ be the solution of (\ref{poisson}), then
$$A_h(u-u_h,v_h)-J_h(u_h,v_h)+B_h(u,v_h)=0~~~,~~~\forall v_h\in V_h^p.$$ 
\end{lemma}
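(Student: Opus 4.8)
The plan is to read the identity directly off the derivation that produced the weak equation \eqref{weakequation}, without redoing any integration by parts. The key observation is that \eqref{weakequation} is precisely the statement that the extended exact solution satisfies $A_h(u,v)+B_h(u,v)=L_h(v)$: grouping the first three terms on the left of \eqref{weakequation} gives exactly $A_h(u,v)$, the remaining two left-hand terms are precisely $B_h(u,v)$ as defined in \eqref{defB}, and the right-hand side is $L_h(v)$. I would first restrict this identity to test functions $v=v_h\in V_h^p$; this is legitimate because $V_h^p\subset H^1$ and, for piecewise polynomials, the boundary traces $\nabla v_h\cdot n_h$ on $\Gamma_h$ appearing in the Nitsche terms are well defined.

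Next I would record that the discrete solution satisfies $A_h(u_h,v_h)+J_h(u_h,v_h)=L_h(v_h)$ for all $v_h\in V_h^p$, which is simply the defining relation \eqref{formulation}. Subtracting this from the previous identity and using that $A_h$ is linear in its first argument — so that $A_h(u,v_h)-A_h(u_h,v_h)=A_h(u-u_h,v_h)$ — cancels the common load $L_h(v_h)$ and yields $A_h(u-u_h,v_h)-J_h(u_h,v_h)+B_h(u,v_h)=0$, which is the claimed consistency relation.

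The only point requiring care, and the main obstacle to making the argument airtight, is justifying that \eqref{weakequation} really holds for the exact solution with the stated regularity $u\in H^2(\mathcal{N}_h)$. This rests on three facts from the derivation preceding \eqref{weakequation}: Green's formula on $\Omega_h$ is valid for $u\in H^2$, so the normal flux $\nabla u\cdot n_h$ has a well-defined trace on $\Gamma_h$; the strong equation $-\Delta u=f$ in \eqref{poisson} lets one write $f+\Delta u$, which vanishes on $\Omega$ and is retained only on $\Omega_h\setminus\Omega$; and the boundary condition $u=g$ on $\Gamma$ together with $p_h(\Gamma_h)\subset\Gamma$ gives $u\circ p_h=g\circ p_h$ on $\Gamma_h$, so the antisymmetric Nitsche term added in the derivation is genuinely consistent (it equals zero on the exact solution before any Taylor truncation is introduced). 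I would also note that the Taylor terms $\langle\nabla v_h\cdot n_h,T_{0,k}(u)\rangle_{\Gamma_h}$ appearing in $A_h(u,v_h)$ and in $B_h(u,v_h)$ carry opposite signs and cancel in the sum $A_h(u,v_h)+B_h(u,v_h)$; hence the identity itself needs no regularity beyond $H^2(\mathcal{N}_h)$, even though bounding $B_h$ later (Theorem \ref{bdcorrection}) will require more.
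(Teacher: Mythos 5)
Your proof is correct and follows exactly the paper's route: the paper's own proof is the one-liner ``Subtracting \eqref{weakequation} and \eqref{formulation} this is straightforward,'' and your argument simply spells out that subtraction, including the correct identification of \eqref{weakequation} as $A_h(u,v)+B_h(u,v)=L_h(v)$ and the cancellation of the $T_{0,k}(u)$ terms. The extra care you take in justifying \eqref{weakequation} for the exact solution (Green's formula, $f+\Delta u=0$ on $\Omega$, and $u\circ p_h=g\circ p_h$ on $\Gamma_h$) is a welcome elaboration of details the paper leaves implicit.
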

\begin{proof}
Subtracting \eqref{weakequation} and \eqref{formulation} this is straightforward. 
\end{proof}
\begin{lemma}
\label{triplestar}
Let $w\in H^2(\mathcal{N}_h)+ V_h^p$ and $v_h\in V_h^p$ there exists a positive constant $M$ such that
$$A_h(w,v_h)\leq M \| w \|_*\vvvert v_h \vvvert_{h}.$$
\end{lemma}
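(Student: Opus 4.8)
The plan is to expand $A_h(w,v_h)$ into four terms using the identity $T_{0,k}(w)=w+T_{1,k}(w)$, which follows directly from the definition \eqref{taylor_exp_definition} (the $i=0$ summand is $w$ itself). This gives
$$A_h(w,v_h)=(\nabla w,\nabla v_h)_{\Omega_h}-\langle\nabla w\cdot n_h,v_h\rangle_{\Gamma_h}+\langle\nabla v_h\cdot n_h,w\rangle_{\Gamma_h}+\langle\nabla v_h\cdot n_h,T_{1,k}(w)\rangle_{\Gamma_h}.$$
I would then apply the Cauchy--Schwarz inequality to each of the four terms and bound every resulting factor by a piece of one of the two norms, so that each term is controlled by $\|w\|_*\vvvert v_h\vvvert_h$.

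The bulk term and the first boundary term are immediate: $(\nabla w,\nabla v_h)_{\Omega_h}\leq\|\nabla w\|_{\Omega_h}\|\nabla v_h\|_{\Omega_h}\leq\|w\|_*\vvvert v_h\vvvert_h$, while for the second term I split the weights symmetrically as $\langle\nabla w\cdot n_h,v_h\rangle_{\Gamma_h}\leq(h^{1/2}\|\nabla w\cdot n_h\|_{\Gamma_h})(h^{-1/2}\|v_h\|_{\Gamma_h})$, the first factor being controlled by the contribution $h\|\nabla w\cdot n_h\|_{\Gamma_h}^2$ in $\|w\|_*^2$ and the second by $h^{-1}\|v_h\|_{\Gamma_h}^2$ in $\vvvert v_h\vvvert_h^2$.

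The core estimate, and the only place where the discreteness of $v_h$ enters, is the bound on $\|\nabla v_h\cdot n_h\|_{\Gamma_h}$ needed for the third and fourth terms. Here I would combine the trace inequality \eqref{trace2} with the inverse inequality \eqref{inverse} (applied to $\nabla v_h$, whose second derivatives on each $K$ are controlled by $h_K^{-1}\|\nabla v_h\|_K$) to get $\|\nabla v_h\cdot n_h\|_{K\cap\Gamma_h}\lesssim h_K^{-1/2}\|\nabla v_h\|_K$, then sum over the boundary elements and invoke the ghost-penalty control \eqref{lower_ghost} to reach $\|\nabla v_h\cdot n_h\|_{\Gamma_h}\lesssim h^{-1/2}\|\nabla v_h\|_{\mathcal{N}_h}\lesssim h^{-1/2}\vvvert v_h\vvvert_h$. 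With this in hand the third term is bounded by $(h^{-1/2}\vvvert v_h\vvvert_h)\|w\|_{\Gamma_h}=\vvvert v_h\vvvert_h(h^{-1/2}\|w\|_{\Gamma_h})\leq\vvvert v_h\vvvert_h\|w\|_*$, and the fourth by $(h^{-1/2}\vvvert v_h\vvvert_h)\|T_{1,k}(w)\|_{\Gamma_h}=\vvvert v_h\vvvert_h(h^{-1/2}\|T_{1,k}(w)\|_{\Gamma_h})\leq\vvvert v_h\vvvert_h\|w\|_*$, the last factors being exactly the two extra terms placed in the $\|\cdot\|_*$ norm.

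The main subtlety I anticipate is precisely the asymmetric roles of $w$ and $v_h$: since $w$ may lie in $H^2(\mathcal{N}_h)$ rather than $V_h^p$, the inverse inequality cannot be applied to $w$, and this is the whole reason the stronger norm $\|\cdot\|_*$ carries the additional boundary contributions $h\|\nabla w\cdot n_h\|_{\Gamma_h}^2$ and $h^{-1}\|T_{1,k}(w)\|_{\Gamma_h}^2$; the weaker norm $\vvvert\cdot\vvvert_h$ suffices for $v_h$ only because of the inverse and ghost-penalty estimates. Summing the four bounds and absorbing the hidden constants into a single $M$ yields $A_h(w,v_h)\leq M\|w\|_*\vvvert v_h\vvvert_h$.
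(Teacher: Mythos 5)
Your proof is correct and follows essentially the same route as the paper's (very terse) proof: split $T_{0,k}(w)=w+T_{1,k}(w)$, apply Cauchy--Schwarz term by term, and use the trace, inverse and ghost-penalty estimates to convert $\|\nabla v_h\cdot n_h\|_{\Gamma_h}$ into $h^{-1/2}\vvvert v_h\vvvert_h$. You in fact supply the weight-splitting details the paper leaves implicit (its displayed bound even shows a factor $h^{-1}$ where $h^{-1/2}$ is what matches the norms), so nothing is missing.
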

\begin{proof}
Using the Cauchy-Schwarz inequality, trace inequality and inverse inequality we have
$$
\begin{aligned}
(\nabla w, \nabla v_h)_{\Omega_h}-\langle\nabla w \cdot n_h&, v_h \rangle_{\Gamma_h}+\langle \nabla v_h \cdot n_h, T_{0,k}(w)\rangle_{\Gamma_h}\\
&\lesssim \|\nabla w\|_{\Omega_h}\|\nabla v_h\|_{\Omega_h}+\|\nabla w \cdot n_h\|_{\Gamma_h}\| v_h\|_{\Gamma_h}\\
&\quad\quad\quad\quad+\| \nabla v_h \|_{\Omega_h}h^{-1}(\|w\|_{\Gamma_h}+\|T_{1,k}(w)\|_{\Gamma_h}).
\end{aligned}
$$
\end{proof}
Using the inf-sup condition from Section \ref{sec:infsup} and the estimate obtained in Section \ref{sec:bdvaluecorrection} the error in the triple norm can now be estimated.
\begin{proposition}
\label{cvgH1}
Let $u\in H^2(\mathcal{N}_h)$ be the solution of (\ref{poisson}) and $u_h\in V_h^p$ the solution of (\ref{formulation}), then
$$
\begin{aligned}
\vvvert u-u_h \vvvert_h\lesssim h^p\|u\|_{H^{p+1}(\Omega)}+h^{-\frac12}\delta_h^{k+1}&\underset{0\leq t\leq\delta_0}{\textup{sup}}\|D^{k+1}u\|_{\Gamma_t}\\
&+\delta_h^{l+1} \underset{0\leq t\leq\delta_0}{\textup{sup}} \|D_n^l(f+\Delta u)\|_{\Gamma_t}.
\end{aligned}
$$
\begin{table}[H]
\begin{center}
 \begin{tabular}{|c|c||c|c||c|c|} 
 \hline
 $p$ & $h^{p}$ & $k$ & $h^{-\frac12}\delta_h^{k+1}$ & $l$ &$\delta_h^{l+1}$ \\ [0.5ex] 
 \hline
  1& $h^1$ & 0 & $h^{1.5}$ & 0 & $h^2$ \\ 
 
 2 & $h^2$ & 1 & $h^{3.5}$ & 1 & $h^4$\\
 
 3 & $h^3$ & 2 & $h^{5.5}$ & 2 & $h^6$\\
 
  4& $h^4$ & 3 & $h^{7.5}$ & 3 & $h^8$\\
 \hline
\end{tabular}
\caption{Order of the terms in the estimation of the $H^1$-error depending on $p$, $k$ and $l$, assuming $\delta_h=\mathcal{O}(h^2)$. }
\end{center}
\end{table}
\end{proposition}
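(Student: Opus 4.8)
The plan is to run a standard Strang-type argument built on the inf-sup stability of Theorem~\ref{infsup}, feeding in the consistency error supplied by the boundary value correction estimate of Theorem~\ref{bdcorrection}. First I would split the error through the interpolant, writing $u-u_h = (u-\pi_h u) + (\pi_h u - u_h)$ and applying the triangle inequality. The interpolation part is controlled immediately by \eqref{estimate_new_norms}, giving $\vvvert u-\pi_h u\vvvert_h \lesssim h^p|u|_{H^{p+1}(\Omega)}$, so the work concentrates on the discrete remainder $e_h := \pi_h u - u_h \in V_h^p$.

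For $e_h$ I would invoke the inf-sup condition of Theorem~\ref{infsup}, so that $\beta\vvvert e_h\vvvert_h \leq \sup_{v_h} (A_h(e_h,v_h)+J_h(e_h,v_h))/\vvvert v_h\vvvert_h$. The key manipulation is to rewrite the numerator by inserting $u$: since $e_h = (\pi_h u - u) + (u - u_h)$, linearity gives
$$A_h(e_h,v_h)+J_h(e_h,v_h) = \big(A_h(\pi_h u - u,v_h)+J_h(\pi_h u - u,v_h)\big) + \big(A_h(u-u_h,v_h)+J_h(u-u_h,v_h)\big).$$
Here I would use that the exact (extended) solution is smooth enough that its derivative jumps vanish across all interior faces, so $J_h(u,\cdot)=0$; hence $J_h(u-u_h,v_h)=-J_h(u_h,v_h)$, and Galerkin orthogonality (Lemma~\ref{galerkin}) collapses the second bracket to $-B_h(u,v_h)$.

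It then remains to bound the three contributions. The term $A_h(\pi_h u - u,v_h)$ is handled by the continuity estimate of Lemma~\ref{triplestar}, yielding $M\|u-\pi_h u\|_*\vvvert v_h\vvvert_h$; the ghost-penalty term $J_h(\pi_h u - u,v_h)$ is bounded via Cauchy--Schwarz by $\vvvert u-\pi_h u\vvvert_h\,\vvvert v_h\vvvert_h$, since $J_h(w,w)\leq\vvvert w\vvvert_h^2$; and $B_h(u,v_h)$ is exactly the quantity estimated in Theorem~\ref{bdcorrection}. Dividing by $\vvvert v_h\vvvert_h$, taking the supremum, and using $\vvvert \cdot\vvvert_h\leq\|\cdot\|_*$ together with \eqref{estimate_new_norms} to replace $\|u-\pi_h u\|_*$ by $h^p|u|_{H^{p+1}(\Omega)}$, I obtain the bound for $\vvvert e_h\vvvert_h$. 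Adding back the interpolation error through the triangle inequality produces the stated estimate.

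I expect the argument itself to be routine once the stability and consistency inputs are assembled; the only point requiring genuine care is the bookkeeping of the ghost-penalty terms, in particular verifying that $J_h(u,v_h)=0$ for the smooth extended solution so that $J_h$ combines cleanly with Galerkin orthogonality rather than leaving an uncontrolled residual. The real analytic difficulty has already been isolated in Theorem~\ref{bdcorrection}, where the factor $h^{-\frac12}\delta_h^{k+1}$ that ultimately drives the suboptimality in the subsequent $L^2$ analysis first appears.
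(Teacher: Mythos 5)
Your argument is correct and follows essentially the same route as the paper: split the error through the interpolant $\pi_h u$, apply the inf-sup condition of Theorem~\ref{infsup} to the discrete part, use Galerkin orthogonality together with $J_h(u,\cdot)=0$ to reduce the numerator to $A_h(u-\pi_h u,v_h)-J_h(\pi_h u,v_h)+B_h(u,v_h)$, and bound these by Lemma~\ref{triplestar}, Cauchy--Schwarz on the ghost penalty, and Theorem~\ref{bdcorrection} respectively. The only difference is cosmetic (the sign convention for $e_h$), so nothing further is needed.
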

\begin{proof}
Using the Galerkin orthogonality of Lemma \ref{galerkin} we obtain
$$
\begin{aligned}
A_h(u_h&-\pi_h u,v_h)+J_h(u_h-\pi_h u,v_h)\\
&=A_h(u-\pi_h u,v_h)-J_h(\pi_h u,v_h)+(f+\Delta u, v_h)_{\Omega_h\backslash\Omega}\\
&\quad\quad\quad\quad\quad\quad\quad\quad\quad\quad\quad+\langle \nabla v_h \cdot n_h, u\circ p_h-T_{0,k}(u)\rangle_{\Gamma_h}.
\end{aligned}
$$
Using this result, Theorem \ref{infsup}, the Lemma \ref{triplestar} and $J_h(u, v_h)=0$ given by the regularity of $u$, we can write
$$
\begin{aligned}
\beta\vvvert u_h-&\pi_h u\vvvert_{h}
\leq\frac{A_h(u-\pi_h u,v_h)-J_h(\pi_h u,v_h)+B_h(u,v_h)}{\vvvert v_h\vvvert_{h}}\\
&\leq M\| u-\pi_hu\|_*+\frac{J_h(\pi_h u-u,\pi_h u-u)^\frac12J_h(v_h,v_h)^\frac12+B_h(u,v_h)}{\vvvert v_h\vvvert_{h}}\\
&\leq (M+1)\| u-\pi_hu\|_*+\frac{B_h(u,v_h)}{\vvvert v_h\vvvert_{h}}
\end{aligned}
$$
Applying the triangle inequality we can write
$$
\begin{aligned}
\vvvert u-u_h\vvvert_h&\leq\vvvert u-\pi_hu\vvvert_h+\vvvert u_h-\pi_hu\vvvert_{h}\\
&\leq\vvvert u-\pi_hu\vvvert_h+\frac{1}{\beta}\Big((M+1)\| u-\pi_hu\|_*+\frac{B_h(u,v_h)}{\vvvert v_h\vvvert_{h}}\Big),
\end{aligned}
$$
Using the estimate \eqref{estimate_new_norms} we conclude applying Theorem \ref{bdcorrection}.
 \end{proof}
The next proof requires these two inequalities and the assumption $\delta_h\lesssim h^2$
\begin{align}
\| T_{1,k}(u-u_h)\|_{\Gamma_h}&\lesssim h^{p+1}\|u\|_{H^{p+1}(\Omega)}+(h^{-\frac32}\delta_h)h\vvvert u-u_h\vvvert_h \label{bound2}\\
h^{\frac12}\|\nabla (u-u_h)\cdot n_h\|_{\Gamma_h}&\lesssim h^{p}\|u\|_{H^1(\Omega)}+\vvvert u-u_h\vvvert_h, \label{bound6}
\end{align}
inequality \eqref{bound2} is proved in \cite{Burman_2015_a}, \eqref{bound6} can be shown using the trace and inverse inequalities in the following way
$$
\begin{aligned}
h^\frac12\|\nabla (u-u_h)\cdot n_h\|_{\Gamma_h}
&\lesssim\|\nabla (u-u_h)\cdot n_h\|_{\mathcal{N}(\Gamma_h)}+h\|D^2 (u-u_h)\|_{\mathcal{N}(\Gamma_h)}\\
&\lesssim\|\nabla (u-\pi_hu)\cdot n_h\|_{\mathcal{N}(\Gamma_h)}+h\|D^2 (u-\pi_hu)\|_{\mathcal{N}(\Gamma_h)}\\
&\quad+\|\nabla (u_h-\pi_hu)\cdot n_h\|_{\mathcal{N}(\Gamma_h)}+h\|D^2 (u_h-\pi_hu)\|_{\mathcal{N}(\Gamma_h)}\\
&\lesssim h^p\|u\|_{H^1(\Omega)}+\|\nabla (u_h-\pi_hu)\|_{\mathcal{N}(\Gamma_h)}\\
&\lesssim h^p\|u\|_{H^1(\Omega)}+\|\nabla (u-u_h)\|_{\mathcal{N}(\Gamma_h)}.
\end{aligned}
$$

\begin{theorem}
\label{L2_proof_poisson}
Let $u\in H^2(\mathcal{N}_h)$ be the solution of (\ref{poisson}) and $u_h\in V_h^p$ the solution of (\ref{formulation}), we assume $\delta_h\lesssim h^2$ then
$$
\begin{aligned}
\| u-u_h \|_{\Omega_h}\lesssim h^{p+\frac12}\|u\|_{H^{p+1}(\Omega)}+\delta_h^{k+1}&\underset{0\leq t\leq\delta_0}{\textup{sup}}\|D^{k+1}u\|_{\Gamma_t}\\
&+ h^{\frac12}\delta_h^{l+1} \underset{0\leq t\leq\delta_0}{\textup{sup}} \|D_n^l(f+\Delta u)\|_{\Gamma_t}.
\end{aligned}
$$
\begin{table}[H]
\begin{center}
 \begin{tabular}{|c|c||c|c||c|c|} 
 \hline
 $p$ & $h^{p+\frac12}$ & $k$ & $\delta_h^{k+1}$ & $l$ &$h^\frac12\delta_h^{l+1}$ \\ [0.5ex] 
 \hline
  1& $h^{1.5}$ & 0 & $h^{2}$ & 0 & $h^{2.5}$ \\ 
 
 2 & $h^{2.5}$ & 1 & $h^{4}$ & 1 & $h^{4.5}$\\
 
 3 & $h^{3.5}$ & 2 & $h^{6}$ & 2 & $h^{6.5}$\\
 
  4& $h^{4.5}$ & 3 & $h^{8}$ & 3 & $h^{8.5}$\\
 \hline
\end{tabular}
\caption{Order of the terms in the estimation of the $L^2$-error depending on $p$, $k$ and $l$, assuming $\delta_h=\mathcal{O}(h^2)$. }
\end{center}
\end{table}
\end{theorem}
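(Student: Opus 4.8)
The plan is to use an Aubin--Nitsche duality argument, accepting from the outset that the non-symmetry of $A_h$ will cost a factor $h^{\frac12}$. Write $e=u-u_h$. First I would split $\|e\|_{\Omega_h}^2=\|e\|_{\Omega}^2+\|e\|_{\Omega_h\setminus\Omega}^2$ and dispose of the thin-layer term immediately with \eqref{bound1}: since $\Omega_h\setminus\Omega\subset U_\delta(\Gamma)$ with $\delta\sim\delta_h\lesssim h^2$, one finds $\|e\|_{\Omega_h\setminus\Omega}\lesssim\delta_h^{\frac12}h^{\frac12}\vvvert e\vvvert_h+\delta_h\vvvert e\vvvert_h$, which after inserting Proposition \ref{cvgH1} is of higher order than $h^{p+\frac12}$. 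It then remains to bound $\|e\|_{\Omega}$, for which I introduce the adjoint solution $z$ of $-\Delta z=e$ in $\Omega$, $z=0$ on $\Gamma$, extended by $\mathbb{E}$ to $\mathcal{N}_h$; the regularity estimate \eqref{regularity} with $s=0$ together with \eqref{ineq_extension} gives $\|z\|_{H^2(\mathcal{N}_h)}\lesssim\|e\|_{\Omega}$.

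Next I would derive a duality identity by integrating the term $(\nabla e,\nabla z)_{\Omega_h}$ of $A_h(e,z)$ by parts over $\Omega_h$ and inserting $-\Delta z=e$ (valid on $\Omega$, with a remainder supported on $\Omega_h\setminus\Omega$). Collecting the boundary contributions on $\Gamma_h$ yields
\[
\|e\|_{\Omega}^2 = A_h(e,z) - 2\langle\nabla z\cdot n_h, e\rangle_{\Gamma_h} + \langle\nabla e\cdot n_h, z\rangle_{\Gamma_h} - \langle\nabla z\cdot n_h, T_{1,k}(e)\rangle_{\Gamma_h} + R_{\mathrm{geo}},
\]
where $R_{\mathrm{geo}}=(e,\Delta z)_{\Omega_h\setminus\Omega}$ is higher order by \eqref{bound1}. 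Since $z$ vanishes on $\Gamma$ and $\Gamma_h$ lies within $\delta_h$ of $\Gamma$, a normal-direction integration gives $\|z\|_{\Gamma_h}\lesssim\delta_h\|z\|_{H^2(\mathcal{N}_h)}$; combined with \eqref{bound6} this makes $\langle\nabla e\cdot n_h, z\rangle_{\Gamma_h}$ of order $h^{p+\frac32}\|e\|_\Omega$, and \eqref{bound2} makes $\langle\nabla z\cdot n_h, T_{1,k}(e)\rangle_{\Gamma_h}$ of order $h^{p+1}\|e\|_\Omega$. The troublesome term is $\langle\nabla z\cdot n_h, e\rangle_{\Gamma_h}$: estimating $\|\nabla z\cdot n_h\|_{\Gamma_h}\lesssim\|z\|_{H^2(\mathcal{N}_h)}\lesssim\|e\|_\Omega$ by \eqref{trace2} and a width-$h$ strip estimate, while only $\|e\|_{\Gamma_h}\leq h^{\frac12}\vvvert e\vvvert_h$ is available from the definition of $\vvvert\cdot\vvvert_h$, gives $|\langle\nabla z\cdot n_h,e\rangle_{\Gamma_h}|\lesssim h^{\frac12}\vvvert e\vvvert_h\|e\|_\Omega$. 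This is precisely where the lack of adjoint consistency bites and where the half-power is lost.

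For the principal term I would split $A_h(e,z)=A_h(e,z-\pi_h z)+A_h(e,\pi_h z)$. The first piece I bound term-by-term by Cauchy--Schwarz, \eqref{trace}, and the interpolation estimates \eqref{interpolation_estimate}--\eqref{estimate_new_norms}; the factors $\|\nabla e\cdot n_h\|_{\Gamma_h}$ and $\|T_{1,k}(e)\|_{\Gamma_h}$ are controlled by \eqref{bound6} and \eqref{bound2}, and since $\|z-\pi_h z\|_*\lesssim h\|z\|_{H^2(\mathcal{N}_h)}$ the whole piece is $\lesssim h^{p+1}\|e\|_\Omega$ (this is where the extra power over the $H^1$ estimate is gained). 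For the second piece I apply the Galerkin orthogonality of Lemma \ref{galerkin}, $A_h(e,\pi_h z)=J_h(u_h,\pi_h z)-B_h(u,\pi_h z)$. Using $J_h(u,\cdot)=0$, the ghost-penalty term equals $-J_h(e,\pi_h z)$ and is bounded by $\vvvert e\vvvert_h\,J_h(\pi_h z-z,\pi_h z-z)^{\frac12}\lesssim h\vvvert e\vvvert_h\|e\|_\Omega$. The boundary-correction term $B_h(u,\pi_h z)$ I would estimate by rerunning the proof of Theorem \ref{bdcorrection} with the smooth test function $\pi_h z$, whose normal flux on $\Gamma_h$ is $O(\|e\|_\Omega)$ rather than $O(h^{-\frac12}\|e\|_\Omega)$ and whose mass in $\Omega_h\setminus\Omega$ is controlled by \eqref{bound1}; this produces exactly the $\delta_h^{k+1}\sup_{0\le t\le\delta_0}\|D^{k+1}u\|_{\Gamma_t}$ and $h^{\frac12}\delta_h^{l+1}\sup_{0\le t\le\delta_0}\|D_n^l(f+\Delta u)\|_{\Gamma_t}$ contributions.

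Finally I would collect all estimates in the duality identity, divide by $\|e\|_\Omega$, and insert $\vvvert e\vvvert_h\lesssim h^p\|u\|_{H^{p+1}(\Omega)}+\cdots$ from Proposition \ref{cvgH1} into the term $h^{\frac12}\vvvert e\vvvert_h\|e\|_\Omega$, which yields the advertised $h^{p+\frac12}\|u\|_{H^{p+1}(\Omega)}$ rate together with the two boundary-value-correction contributions. The main obstacle is the single non-cancelling boundary term $\langle\nabla z\cdot n_h,e\rangle_{\Gamma_h}$: in a symmetric, adjoint-consistent Nitsche method it would combine with a penalty term to gain a further power of $h$, but here no such mechanism exists, so the best one can extract is $h^{\frac12}\vvvert e\vvvert_h\|e\|_\Omega$. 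A secondary, purely technical difficulty is the careful bookkeeping of the geometric mismatch between $\Omega,\Gamma$ and $\Omega_h,\Gamma_h$, for which the smallness $\delta_h\lesssim h^2$ and the layer estimate \eqref{bound1} guarantee that all such remainders are of higher order.
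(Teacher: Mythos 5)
Your proposal is correct and follows essentially the same route as the paper's proof: the same duality identity with the boundary terms $-2\langle\nabla z\cdot n_h,e\rangle_{\Gamma_h}$, $\langle\nabla e\cdot n_h,z\rangle_{\Gamma_h}$, $\langle\nabla z\cdot n_h,T_{1,k}(e)\rangle_{\Gamma_h}$, the same splitting $A_h(e,z-\pi_h z)+A_h(e,\pi_h z)$ with Galerkin orthogonality and the bound on $B_h(u,\pi_h z)$, and the same identification of $\langle\nabla z\cdot n_h,e\rangle_{\Gamma_h}$ as the adjoint-inconsistency term costing the factor $h^{\frac12}$. The only cosmetic difference is that you split off the thin layer $\Omega_h\setminus\Omega$ at the outset, whereas the paper absorbs it by taking the adjoint datum $\psi$ to be $u-u_h$ extended by zero outside $\Omega_h$; the two devices are equivalent.
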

\begin{proof}
We define the function $\psi$ such that
\begin{displaymath}
\psi=
\left\{
\begin{array}{rllr}
u-u_h&\text{in}& \Omega_h\\
0&\text{in}& \Omega\backslash\Omega_h.
\end{array}\right.
\end{displaymath}
Let $z$ satisfy the adjoint problem
\begin{equation}
\begin{aligned}
-\Delta z&=\psi\quad\text{in}~\Omega,\nonumber\\
z&=0\quad~\text{on}~\Gamma,
\end{aligned}
\end{equation}
$z$ is extended to $U_{\delta_0}(\Omega)$ using the extension operator. In this framework, the following estimates hold
\begin{align}
\|z\|_{H^2(\Omega)}&\lesssim\|\psi\|_{\Omega\cap\Omega_h}\label{bound3},\\
\|z\|_{\Omega_h\backslash \Omega}&\lesssim \delta_h \|\nabla z\cdot n\|_{U_{\delta_h}(\Gamma)}\label{bound4},\\
\|z\|_{\Gamma_h}&\lesssim \delta_h^{\frac12}\|\nabla z \cdot n\|_{U_{\delta_h}(\Gamma)}\label{bound5},
\end{align}
inequalities \eqref{bound4} and \eqref{bound5} has been shown in \cite{Burman_2015_a}. Using integration by parts, the $L^2$-error on $\Omega_h$ can be written as
$$
\begin{aligned}
\|u-&u_h\|_{\Omega_h}
=(u-u_h,\psi+\Delta z)_{\Omega_h}-(u-u_h,\Delta z)_{\Omega_h}\\
&=(u-u_h,\psi+\Delta z)_{\Omega_h\backslash\Omega}+(\nabla (u-u_h),\nabla z)_{\Omega_h}-\langle u-u_h,\nabla z \cdot n_h \rangle_{\Gamma_h}\\
&=(u-u_h,\psi+\Delta z)_{\Omega_h\backslash\Omega}+A_h(u-u_h,z)+\langle \nabla (u-u_h) \cdot n_h,z\rangle_{\Gamma_h}\\
&\quad\quad-2\langle\nabla z\cdot n_h,u-u_h\rangle_{\Gamma_h}-\langle\nabla z\cdot n_h,T_{1,k}(u-u_h)\rangle_{\Gamma_h}.
\end{aligned}
$$
Using \eqref{bound1}, the property of the extension operator \eqref{ineq_extension} and \eqref{bound3}, we can write
$$
\begin{aligned}
(u-u_h,&\psi+\Delta z)_{\Omega_h\backslash\Omega}
\leq\|u-u_h\|_{\Omega_h\backslash\Omega}\|\psi+\Delta z\|_{\Omega_h\backslash\Omega}\\
&\leq(\delta_h^2\|\nabla(u-u_h)\cdot n\|_{\Omega_h\backslash\Omega}^2+\delta_h\|u-u_h\|_{\Gamma_h}^2)^{\frac12}(\|\psi\|_{\Omega_h\backslash\Omega}+\|\Delta z\|_{\Omega_h\backslash\Omega})\\
&\lesssim(\delta_h^2+h\delta_h)^{\frac12}\vvvert u-u_h\vvvert_{h}(\|u-u_h\|_{\Omega_h\backslash\Omega}+\|z\|_{H^2(\Omega)})\\
&\lesssim(\delta_h^2+h\delta_h)^{\frac12}\vvvert u-u_h\vvvert_{h}\|u-u_h\|_{\Omega_h}.
\end{aligned}
$$
Using the interpolant defined by \eqref{def_interpol} we obtain
\begin{equation*}
A_h(u-u_h,z)\leq|A_h(u-u_h,z-\pi_hz)+A_h(u-u_h,\pi_hz)|,
\end{equation*}
by Cauchy-Schwarz inequality, \eqref{bound2}, \eqref{bound6} and \eqref{bound3} we have
$$
\begin{aligned}
&A_h(u-u_h,z-\pi_hz)\\
&\lesssim(\vvvert u-u_h\vvvert_h+h^\frac12\|\nabla (u-u_h)\cdot n_h\|_{\Gamma_h}+h^{-\frac12}\|T_{1,k}(u-u_h)\|_{\Gamma_h})\vvvert z-\pi_hz \vvvert_*\\
&\lesssim((1+h^{-1}\delta_h)h\vvvert u-u_h\vvvert_h+h^{p+1}\|u\|_{H^1(\Omega)})\| u-u_h\|_{\Omega_h}.
\end{aligned}
$$
The Galerkin orthogonality of Lemma \ref{galerkin} allows us to write
$$
\begin{aligned}
A_h(u-u_h,\pi_hz)
&\lesssim|B_h(u,\pi_hz)+J_h(u_h,\pi_hz)|\\
&\lesssim|B_h(u,\pi_hz)+J_h(u_h,u_h)^\frac12J_h(\pi_hz,\pi_hz)^\frac12|.
\end{aligned}
$$
From \cite{Burman_2012_a} and the properties of $z$ we have
$$J_h(\pi_hz,\pi_hz)^\frac12=J_h(\pi_hz-z,\pi_hz-z)^\frac12\lesssim h\|z\|_{H^{2}(\Omega)}\lesssim h\| u-u_h\|_{\Omega_h},$$
we also have the upper bound
\begin{equation*}
J_h(u_h,u_h)^\frac12\lesssim\vvvert u_h-\pi_hu\vvvert_h+J_h(\pi_hu,\pi_hu)^\frac12\lesssim \vvvert u_h-\pi_hu\vvvert_h+ h^p\|u\|_{H^{p+1}(\Omega)}.
\end{equation*}
Then using the proof of Proposition \ref{cvgH1} we have
$$
\begin{aligned}
|J_h(u,\pi_hz)|
\lesssim\Big(h^{p+1}\|u\|_{H^{p+1}(\Omega)}+&~h^{\frac12}\delta_h^{k+1}\underset{0\leq t\leq\delta_0}{\textup{sup}}\|D^{k+1}u\|_{\Gamma_t}\\&+h\delta_h^{l+1} \underset{0\leq t\leq\delta_0}{\textup{sup}}\|D_n^l(f+\Delta u)\|_{\Gamma_t}\Big)\| u-u_h\|_{\Omega_h}.
\end{aligned}
$$
Using equation \eqref{B_for_L2} the term $B_h(u,\pi_hz)$ can also be bounded with
$$
\begin{aligned}
|B_h(u,\pi_hz)|
\lesssim\delta_h^{k+1}\underset{0\leq t\leq\delta_0}{\text{sup}}&\|D^{k+1}u\|_{\Gamma_t}\|\nabla \pi_hz \cdot n_h\|_{\Gamma_h}\\
&+ \delta_h^{l+\frac12} \underset{0\leq t\leq\delta_0}{\text{sup}} \|D_n^l(f+\Delta u)\|_{\Gamma_t}\| \pi_hz\|_{\Omega_h\backslash \Omega}.
\end{aligned}
$$
Using the global trace inequality $\|\nabla z\cdot n_h\|_{\Gamma_h}\lesssim \|z\|_{H^2(\Omega_h)}$, \eqref{ineq_extension} and \eqref{bound3} we can write
$$
\begin{aligned}
\|\nabla \pi_hz \cdot n_h\|_{\Gamma_h}
&\leq\|\nabla (\pi_hz-z) \cdot n_h\|_{\Gamma_h}+\|\nabla z \cdot n_h\|_{\Gamma_h}\\
&\lesssim h^{-\frac12}\vvvert\pi_hz-z\vvvert_*+\|z\|_{H^2(\Omega_h)}\\
&\lesssim h^{\frac12}\|z\|_{H^2(\Omega)}+\|z\|_{H^2(\Omega_h)}\\
&\lesssim\|u-u_h\|_{\Omega_h}.
\end{aligned}
$$
Using inequalities \eqref{ineq_extension}, \eqref{bound3} and \eqref{bound4} we obtain
$$
\begin{aligned}
\|\pi_hz\|_{\Omega_h\backslash \Omega}
&\leq\|\pi_hz-z\|_{\Omega_h\backslash \Omega}+\|z\|_{\Omega_h\backslash \Omega}\\
&\leq h^2\|z\|_{H^2(\Omega)}+\delta_h \|\nabla z\|_{U_{\delta_h}(\Gamma)}\\
&\leq h^2\|u-u_h\|_{\Omega_h}.
\end{aligned}
$$
Then we obtain the upper bound
$$
\begin{aligned}
|B_h(u,\pi_hz)|
\lesssim\Big(&\delta_h^{k+1}\underset{0\leq t\leq\delta_0}{\text{sup}}\|D^{k+1}u\|_{\Gamma_t}\|\nabla \pi_hz \cdot n_h\|_{\Gamma_h}\\&+ h^2\delta_h^{l+\frac12} \underset{0\leq t\leq\delta_0}{\text{sup}} \|D_n^l(f+\Delta u)\|_{\Gamma_t}\| \pi_hz\|_{\Omega_h\backslash \Omega}\Big)\|u-u_h\|_{\Omega_h},
\end{aligned}
$$
and
$$
\begin{aligned}
A_h(u-u_h,z)\lesssim \Big(h^{p+1}&\|u\|_{H^{p+1}(\Omega)}+\delta_h^{k+1}\underset{0\leq t\leq\delta_0}{\text{sup}}\|D^{k+1}u\|_{\Gamma_t}\\&+h^2\delta_h^{l+\frac12} \underset{0\leq t\leq\delta_0}{\text{sup}} \|D_n^l(f+\Delta u)\|_{\Gamma_t}\Big)\| u-u_h\|_{\Omega_h}.
\end{aligned}
$$
Using \eqref{bound5} and \eqref{bound3} and the  we have
\begin{equation*}
\|z\|_{\Gamma_h}
\lesssim\delta_h^\frac12\|\nabla z\cdot n\|_{U_{\delta_h}(\Gamma)}\lesssim \delta_h\underset{0\leq t\leq\delta_0}{\text{sup}}\|\nabla z\cdot n\|_{\Gamma_t}\lesssim \delta_h \|z\|_{H^2(\Omega)}\lesssim\delta_h\|u-u_h\|_{\Omega_h},
\end{equation*}
using this result with \eqref{bound6} and \eqref{bound2} we have
\begin{equation*}
\begin{split}
|\langle& \nabla (u-u_h) \cdot n_h,z\rangle_{\Gamma_h}-\langle\nabla z\cdot n_h,T_{1,k}(u-u_h)\rangle_{\Gamma_h}|\\
&\leq\|\nabla (u-u_h) \cdot n_h\|_{\Gamma_h}\| z\|_{\Gamma_h}+\|\nabla z\cdot n_h\|_{\Gamma_h}\| T_{1,k}(u-u_h)\|_{\Gamma_h}\\
&\lesssim (h^{p}\|u\|_{H^1(\Omega)}+\vvvert u-u_h\vvvert_h)h^{-\frac12}\| z\|_{\Gamma_h}+\|z\|_{H^2(\Omega_h)}\| T_{1,k}(u-u_h)\|_{\Gamma_h}\\
&\lesssim(h^{-\frac32}\delta_h(h^{p+1}\|u\|_{H^1(\Omega)}+h\vvvert u-u_h\vvvert_h)+\| T_{1,k}(u-u_h)\|_{\Gamma_h})\|u-u_h\|_{\Omega_h}\\
&\lesssim ((h^{-\frac12}\delta_h)\vvvert u-u_h\vvvert_h+h^{p+1}\|u\|_{H^{p+1}(\Omega)})\|u-u_h\|_{\Omega_h}.
\end{split}
\end{equation*}
Also
\begin{equation*}
\begin{split}
|\langle\nabla z\cdot n_h,u-u_h\rangle_{\Gamma_h}|
&\leq \|\nabla z\cdot n_h\|_{\Gamma_h}\|u-u_h\|_{\Gamma_h}\\
&\leq \|z\|_{H^2(\Omega_h)}h^{\frac12}\vvvert u-u_h\vvvert_h\\
&\leq \|u-u_h\|_{\Omega_h}h^{\frac12}\vvvert u-u_h\vvvert_h.
\end{split}
\end{equation*}
Using $\delta_h\lesssim h^2$ we obtain the bound
\begin{equation*}
\begin{split}
&\langle \nabla (u-u_h) \cdot n_h,z\rangle_{\Gamma_h}-\langle\nabla z\cdot n_h,T_{1,k}(u-u_h)\rangle_{\Gamma_h}-2\langle\nabla z\cdot n_h,u-u_h\rangle_{\Gamma_h}\\
&\lesssim \Big(h^{p+1}\|u\|_{H^{p+1}(\Omega)}+h^\frac12\vvvert u-u_h\vvvert_h\Big)\|u-u_h\|_{\Omega_h}\\
&\lesssim\Big(h^{p+\frac12}\|u\|_{H^{p+1}(\Omega)}+\delta_h^{k+1}\underset{0\leq t\leq\delta_0}{\textup{sup}}\|D^{k+1}u\|_{\Gamma_t}\\
&\quad\quad\quad\quad\quad\quad\quad\quad\quad\quad\quad+ h^{\frac12}\delta_h^\frac12\delta_h^{l+\frac12} \underset{0\leq t\leq\delta_0}{\textup{sup}} \|D_n^l(f+\Delta u)\|_{\Gamma_t}\Big)\|u-u_h\|_{\Omega_h}.
\end{split}
\end{equation*}
The Theorem follows.
 \end{proof}

\section{Numerical Results and Discussion}
We will consider 3 examples of increasing complexity to corroborate the theoretical findings in the previous sections. 
The exact boundary of the domain $\Omega$ is described using analytical expressions of level set functions whose zero level set describes the boundary. We first consider a circular domain and a domain with convex and concave boundaries with zero dirichlet boundary conditions and then a flower shape domain with non-zero Dirichlet boundary conditions. We will demonstrate the effect of the boundary value correction terms for polynomial order 2 and 3. In all examples, we set the ghost penalty parameter to $\gamma_p=0.1$.

\subsection{Reference Solution in Circle with Zero Dirichlet Boundary Conditions}
In our first example, we consider a circular domain described by the zero level set of 
\begin{equation*}
\phi=R-1
\end{equation*}
where $R = \sqrt{x^2+y^2}$. 
We investigate the convergence of the numerical solution to the  following analytical solution 
\begin{equation*}
u(x,y)=\cos(\pi\frac{R^2}{2}),
\end{equation*}
which we prescribed using
\begin{equation*}
f(x,y)=\pi^{2} R^{2}  \cos{\left (\pi \left(\frac{R^{2}}{2} \right) \right )} + 2 \pi \sin{\left (\pi \left(\frac{R^{2}}{2} \right) \right )}.
\end{equation*}
The solution and the linear approximation of the domain are depicted in Figure~\ref{fig: ref circle sol}. Figure~\ref{fig: circleconvergence} shows the convergence rates of the numerical solution in the $H^1$ and $L^2$-norm. The order of convergence is optimal when a Taylor expansion of first order is used ($k=1$). Adding terms beyond the first order term in the Taylor expansion does not yield any improvment in the rate of convergence ($k>1$). 
\begin{figure}[h!]
\includegraphics[width=0.5\textwidth]{./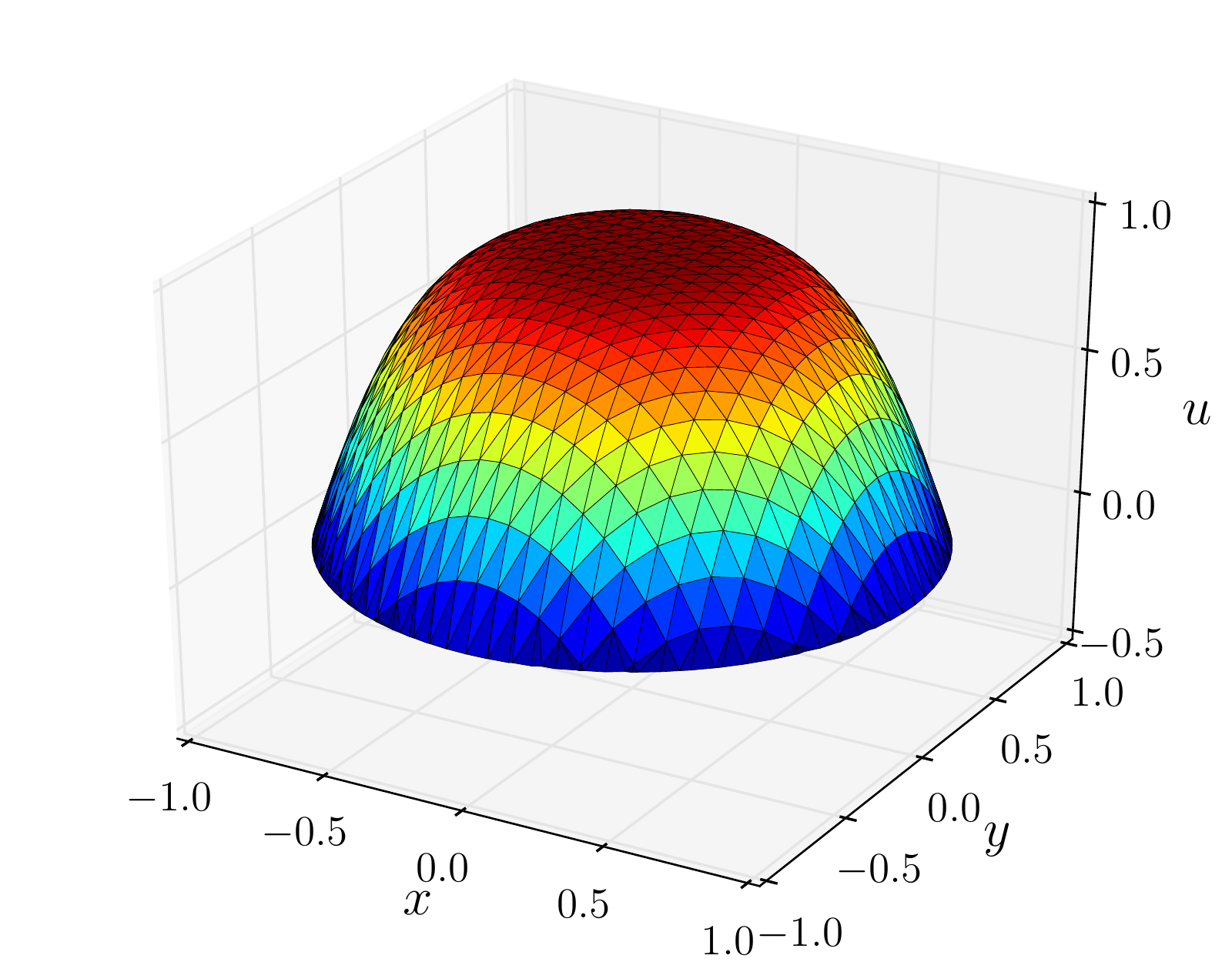}
\includegraphics[width=0.5\textwidth]{./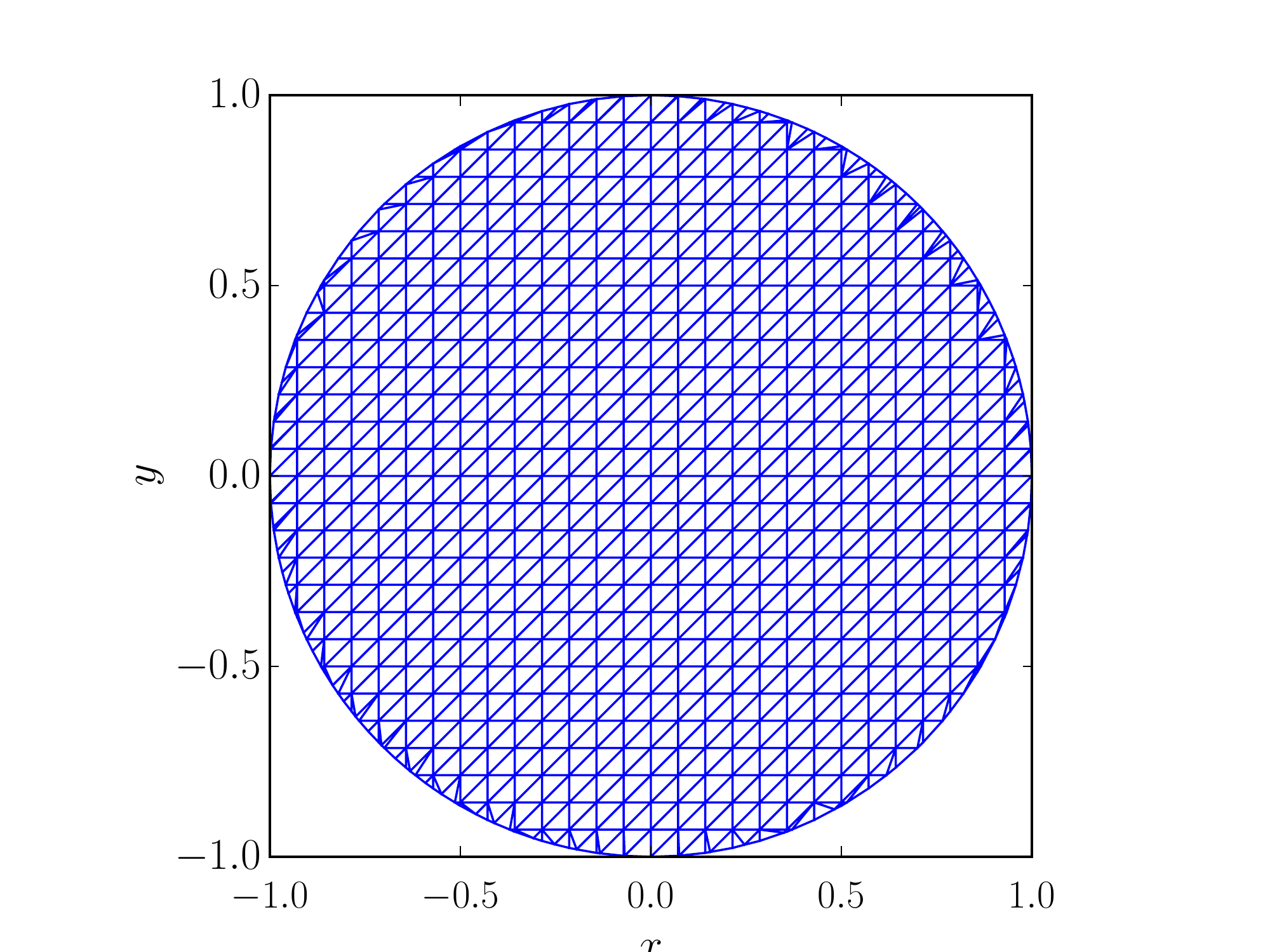}
\caption{Reference solution $u$ and the cut finite element mesh in the circle geometry.}
\label{fig: ref circle sol}
\end{figure}
\begin{figure}[h!]
\subfloat[a][$p=2$]{
\includegraphics[width=0.5\textwidth]{./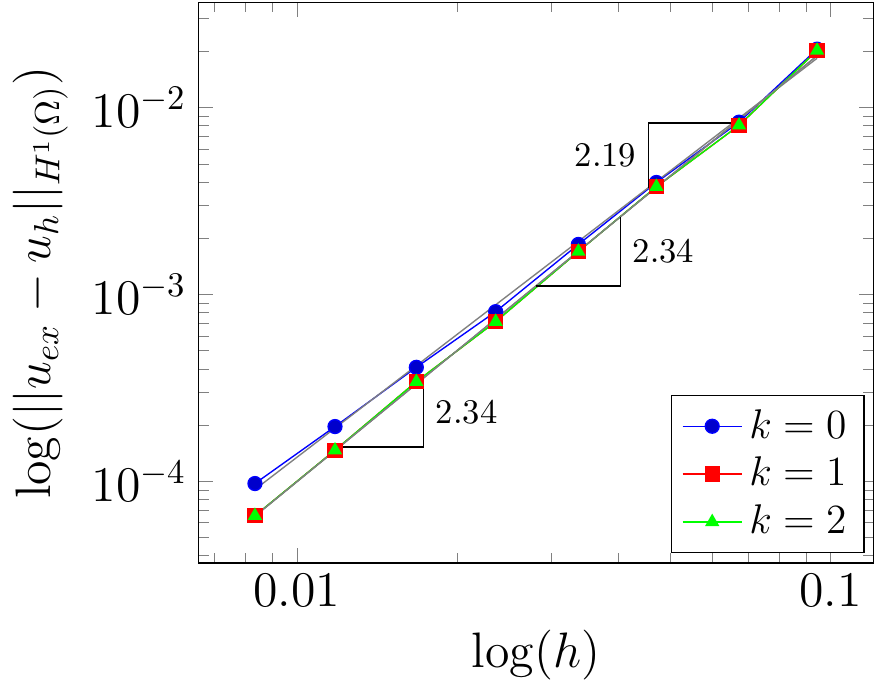}\label{subfig: p2h1circle}}
\subfloat[a][$p=3$]{\includegraphics[width=0.5\textwidth]{./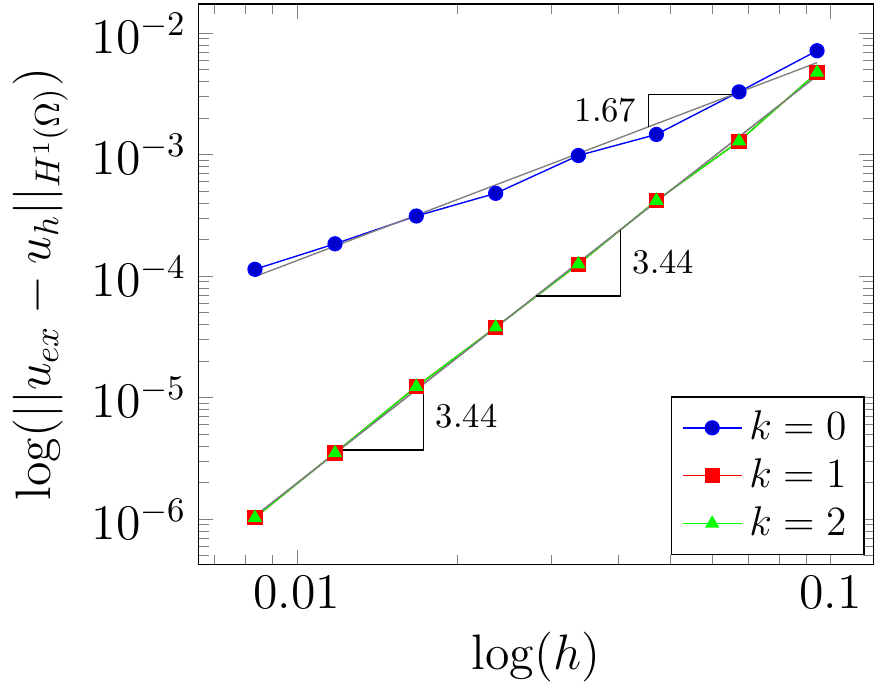}\label{subfig: p3h1circle}} \\
\subfloat[a][$p=2$]{
\includegraphics[width=0.5\textwidth]{./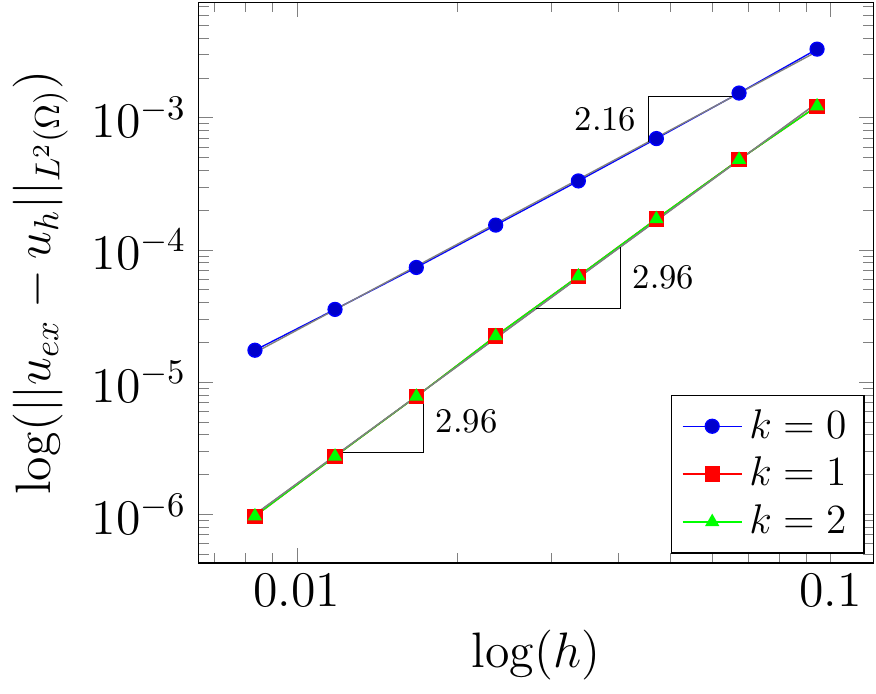}\label{subfig: p2l2circle}}
\subfloat[a][$p=3$]{\includegraphics[width=0.5\textwidth]{./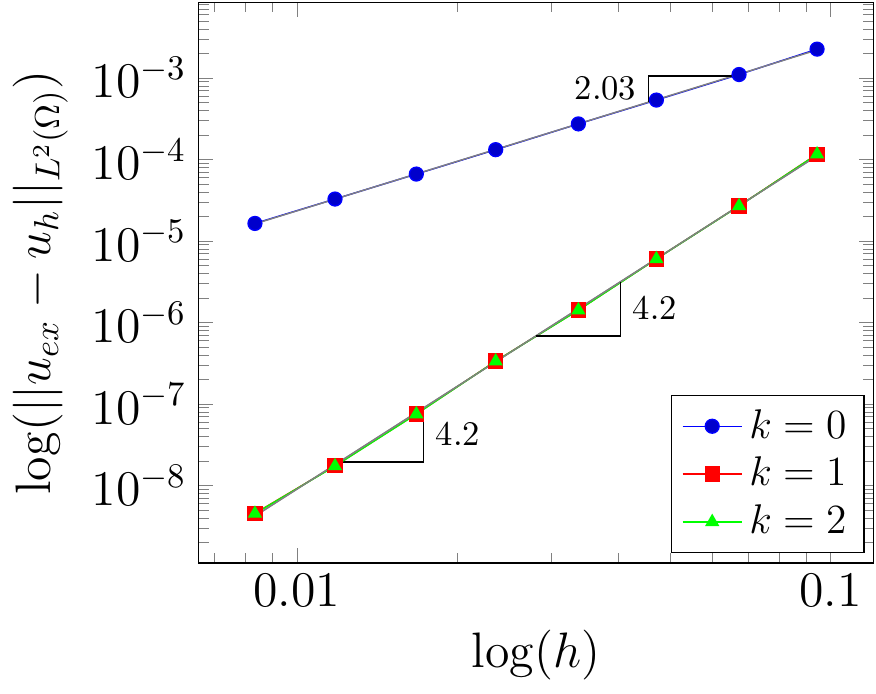}\label{subfig: p3l2circle}} \\
\caption{Convergence rates for the first reference solution in the circle in the $H_1$ norm for $p=2$, \protect\subref{subfig: p3h1circle}, $p=3$ for Taylor expansions of order $k=0,1,2$ and in the $L_2$ norm for \protect\subref{subfig: p2l2circle} $p=2$,\protect\subref{subfig: p3l2circle} $p=3$.}
\label{fig: circleconvergence}
\end{figure}
\pagebreak
\subsection{Reference Solution in Torus with Zero Dirichlet Boundary Conditions}
Next, we consider a domain with convex and concave boundaries given by the zero level set of the function
\begin{equation}
\phi=\left(R - 0.75\right) \left(R - 0.25\right).
\end{equation}
We set 
\begin{align}
f =& 20 \left(4 - \frac{1}{R} \right)
\end{align}
and obtain the analytical solution 
\begin{equation}
u=20\left(0.75 - R \right) \left(R - 0.25\right). 
\end{equation}
as shown in Figure~\ref{fig: ref torus sol}. The convergence rates shown in Figure~\ref{fig: torusconvergence} are optimal for $p=2$, $p=3$ when a first order Taylor expansion is used in the boundary value correction terms. 
\begin{figure}[h!]
\subfloat[]{
\includegraphics[width=0.5\textwidth]{./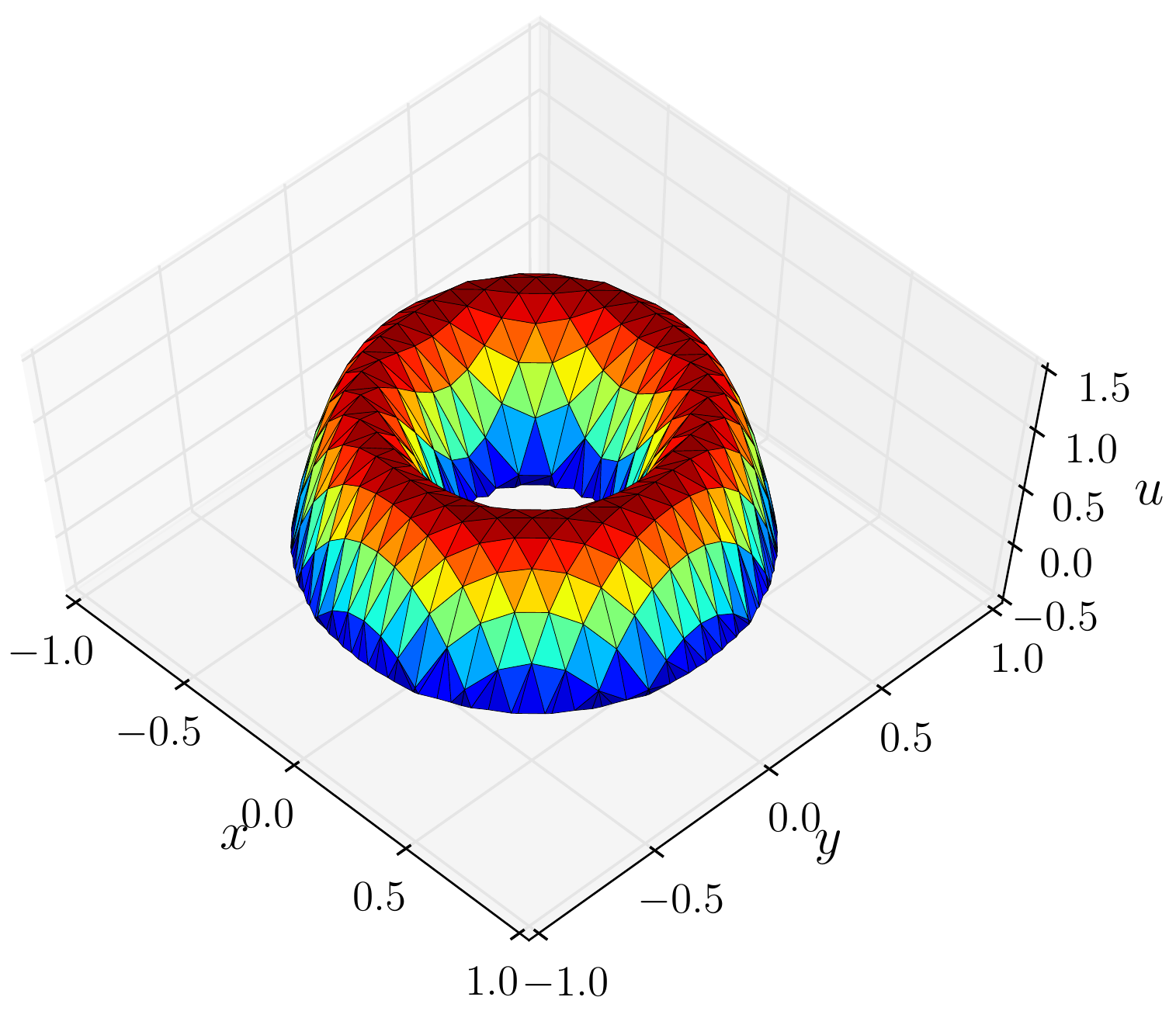}\label{subfig: torus sol}}
\subfloat[]{\includegraphics[width=0.5\textwidth]{./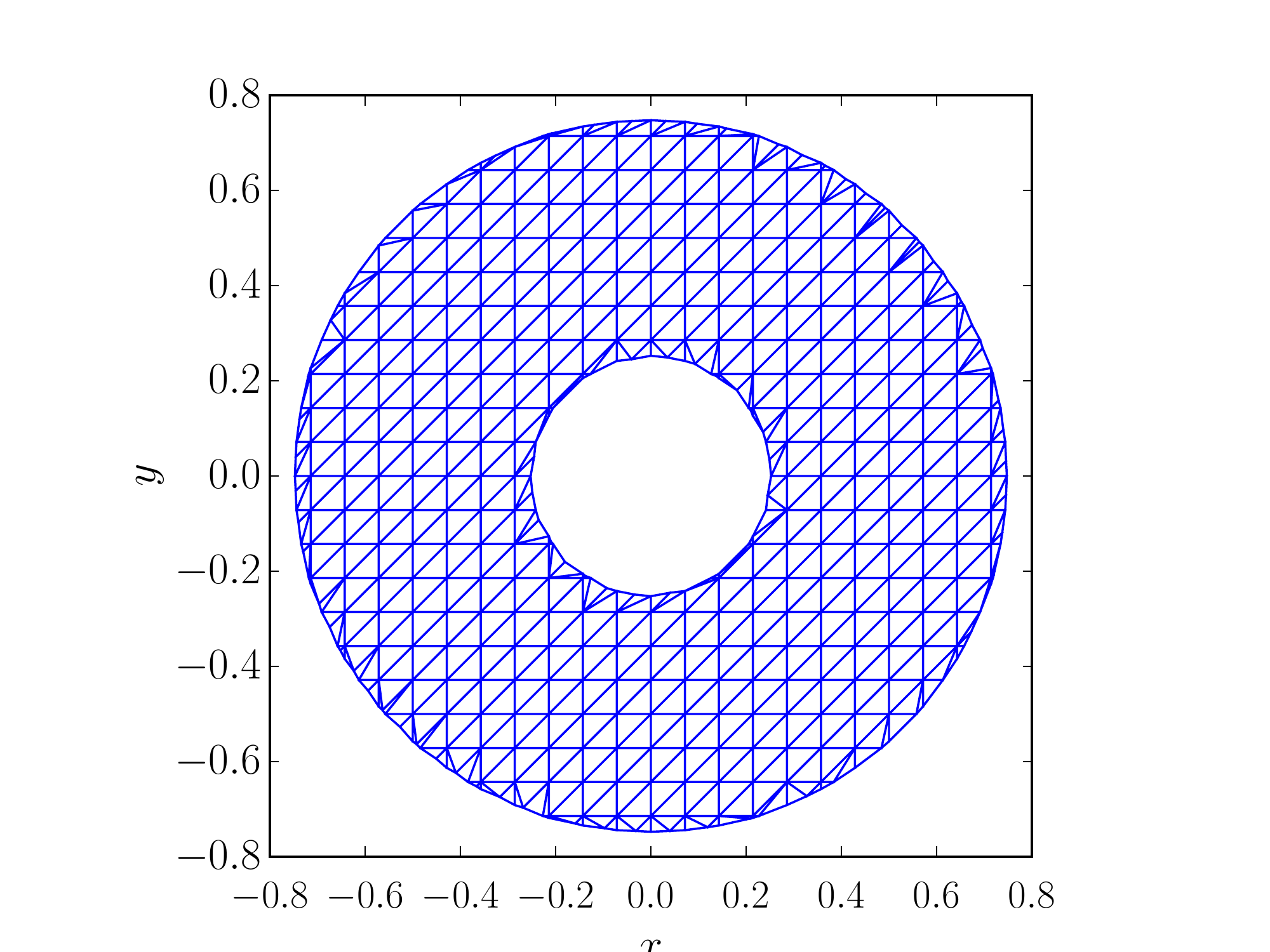}\label{subfig: torus mesh}}
\caption{Reference solution $u$ in a torus-like shaped geometry and the cut finite element mesh of the domain.}
\label{fig: ref torus sol}
\end{figure}
\begin{figure}[h!]
\subfloat[a][$p=2$]{
\includegraphics[width=0.5\textwidth]{./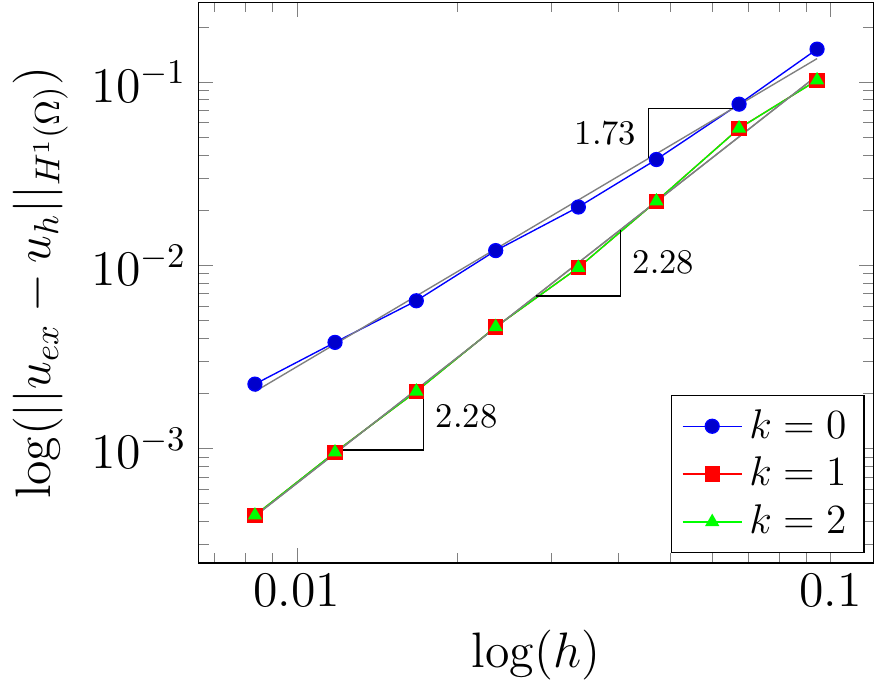}\label{subfig: p2h1torus}}
\subfloat[a][$p=3$]{\includegraphics[width=0.5\textwidth]{./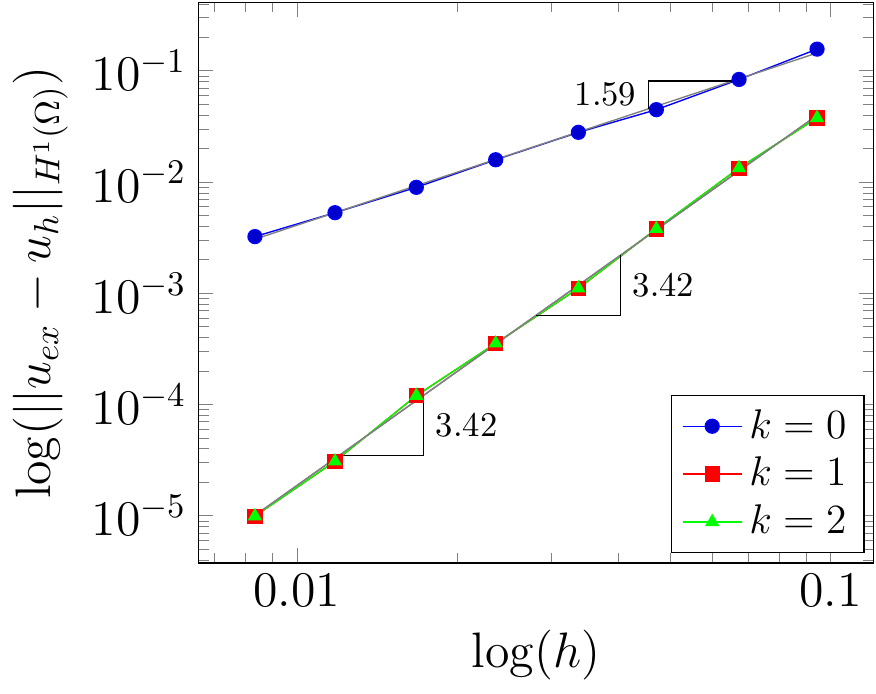}\label{subfig: p3h1torus}} \\
\subfloat[a][$p=2$]{
\includegraphics[width=0.5\textwidth]{./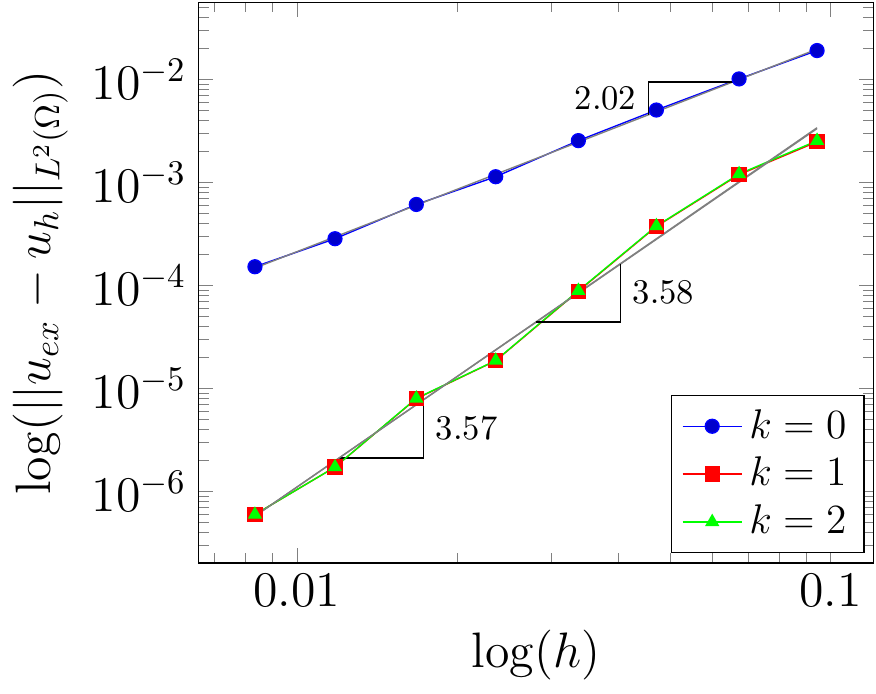}\label{subfig: p2l2torus}}
\subfloat[a][$p=3$]{\includegraphics[width=0.5\textwidth]{./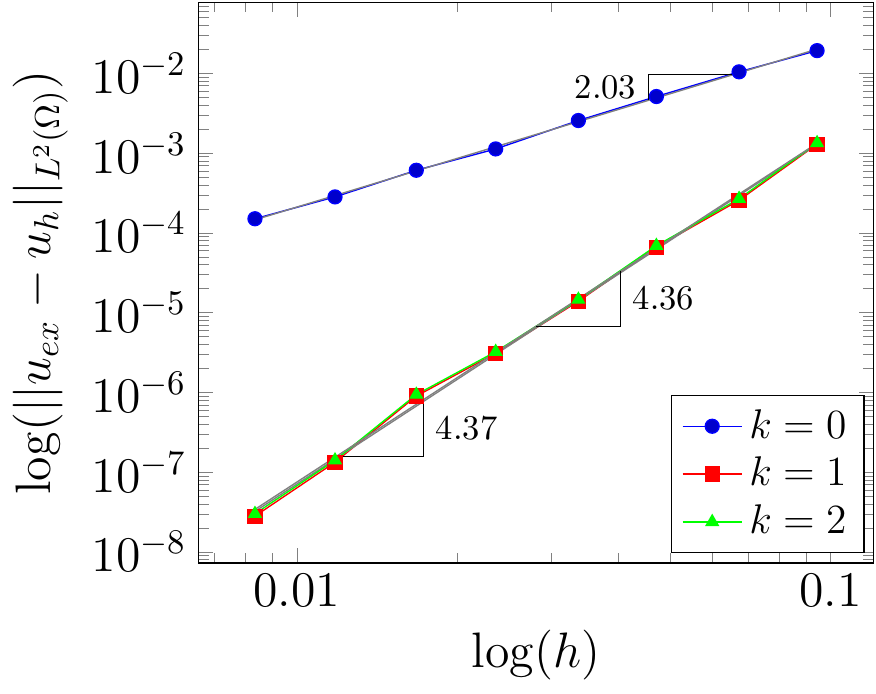}\label{subfig: p3l2torus}} \\
\caption{Convergence rates for the reference solution in the domain including convex and concave boundaries in the $H_1$ norm for \protect\subref{subfig: p2h1torus} $p=2$, \protect\subref{subfig: p3h1torus}, $p=3$ for Taylor expansions of order $k=0,1,2$ and in the $L_2$ norm for \protect\subref{subfig: p2l2torus} $p=2$,\protect\subref{subfig: p3l2torus} $p=3$.}
\label{fig: torusconvergence}
\end{figure}

%

\subsection{Reference Solution in Flower Shape with Non-Zero Dirichlet Boundary Conditions}
In our final example, we consider a flower like shaped domain ~\cite{lehrenfeld2016high} defined by
\begin{equation}
\phi = (R^2-r_{\theta})(R^2-(1.0/6.0)^2)
\end{equation}
with $r_{\theta} = r_0 + 0.1 \sin(\omega \theta)$, 
$r_0 = 1/2$, $\omega = 8$ and $\theta=\arctan(x/y)$. We investigate the convergence rates of our numerical solution 
with respect to 
\begin{equation}
u=\cos{\left ( \pi \frac{x}{2} \right )} \cos{\left ( \pi \frac{y}{2} \right )}
\end{equation}
\begin{equation}
f=\frac{\pi^{2}}{2} \cos{\left ( \pi \frac{x}{2} \right )} \cos{\left ( \pi \frac{y}{2} \right )}
\end{equation}
The reference solution and the cut mesh are shown in Figure~\ref{fig: ref flower sol}. Figure~\ref{fig: flowerconvergence} shows the convergence rate for $p=2$ and $p=3$. 
\begin{figure}[h!]
\includegraphics[width=0.5\textwidth]{./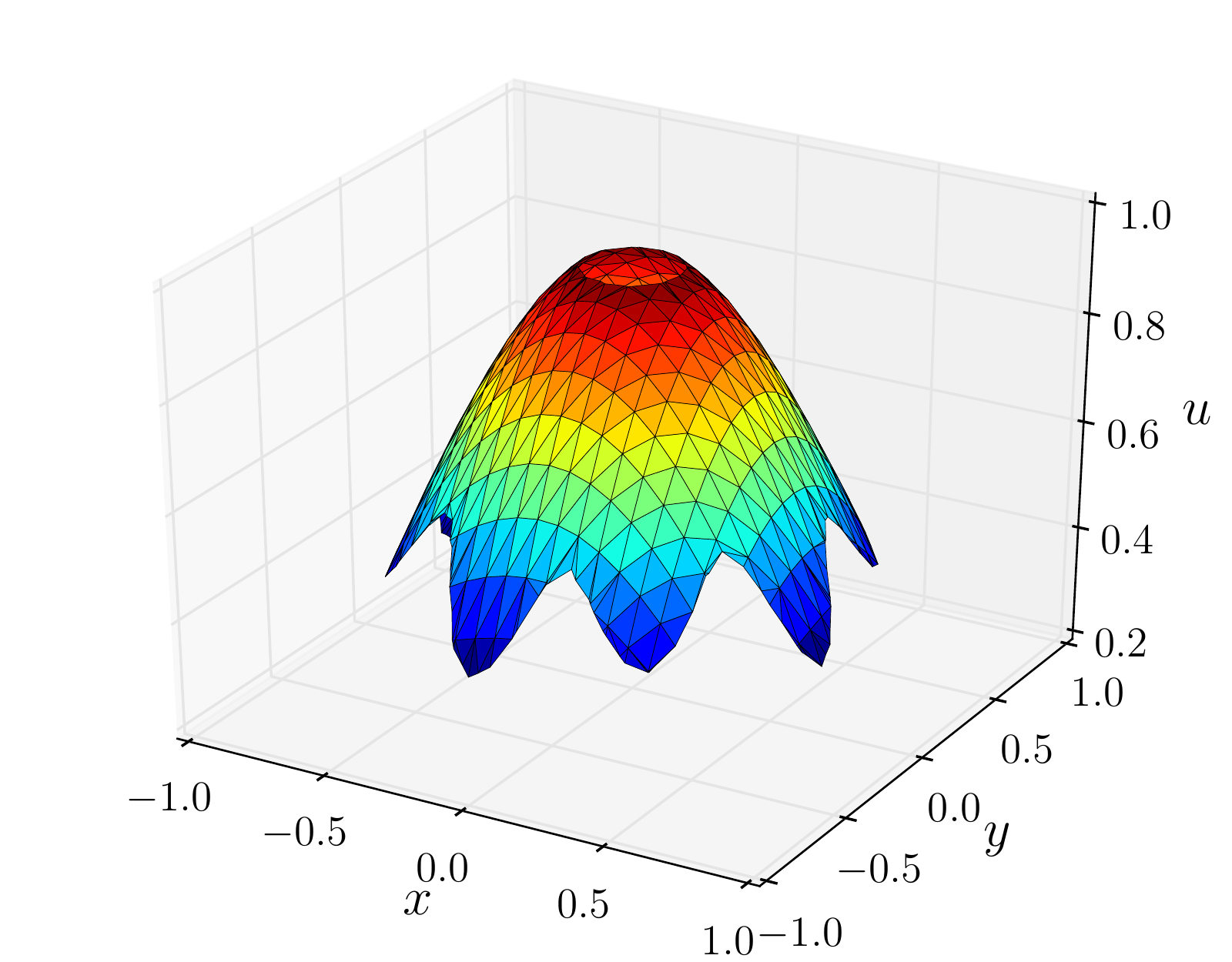}
\includegraphics[width=0.5\textwidth]{./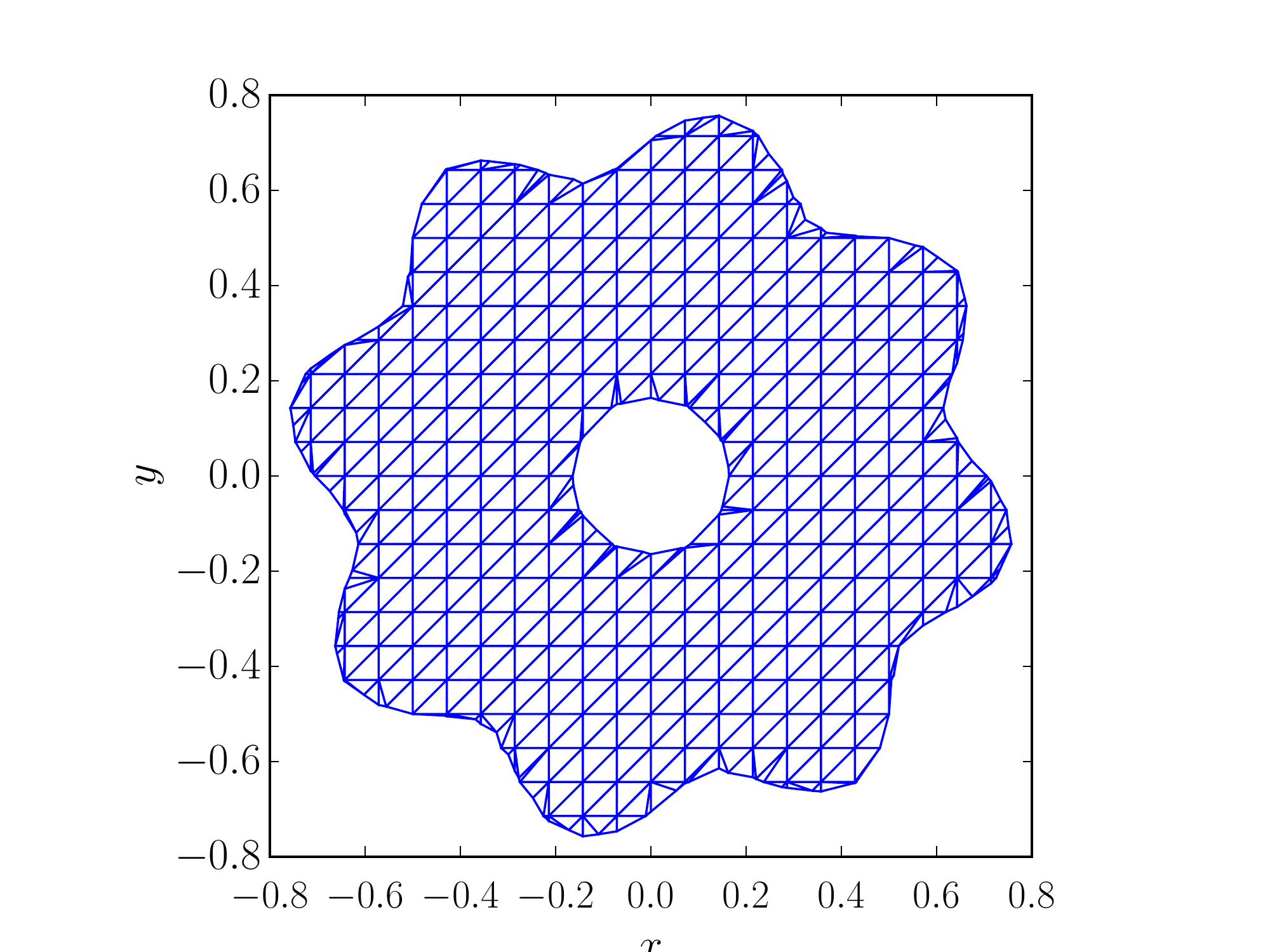}
\caption{Reference solution $u$ in flower geometry and the cut finite element mesh of the domain.}
\label{fig: ref flower sol}
\end{figure}
\begin{figure}[h!]
\subfloat[a][$p=2$]{
\includegraphics[width=0.5\textwidth]{./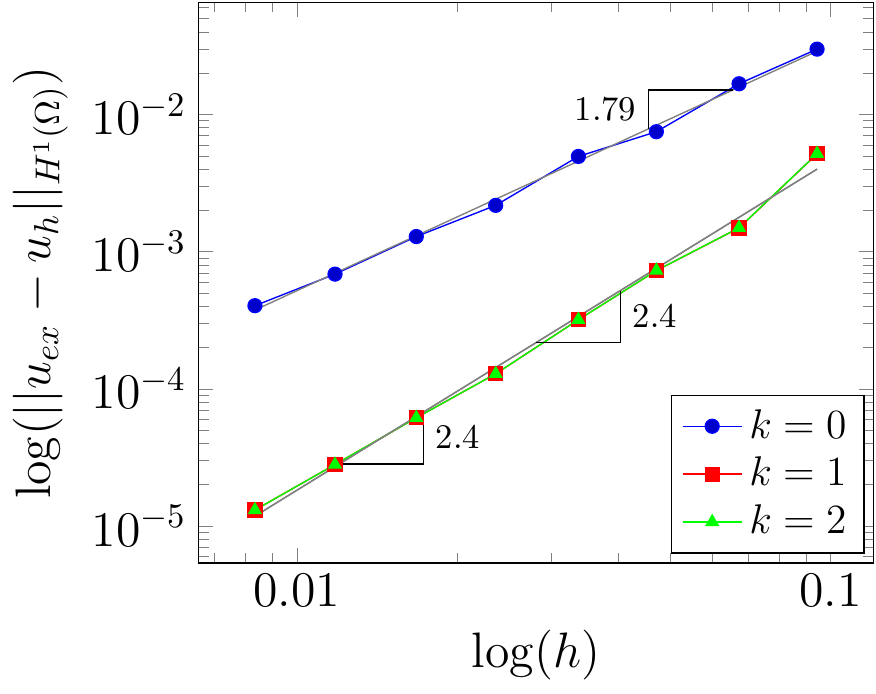}\label{subfig: p2h1flower}}
\subfloat[a][$p=3$]{\includegraphics[width=0.5\textwidth]{./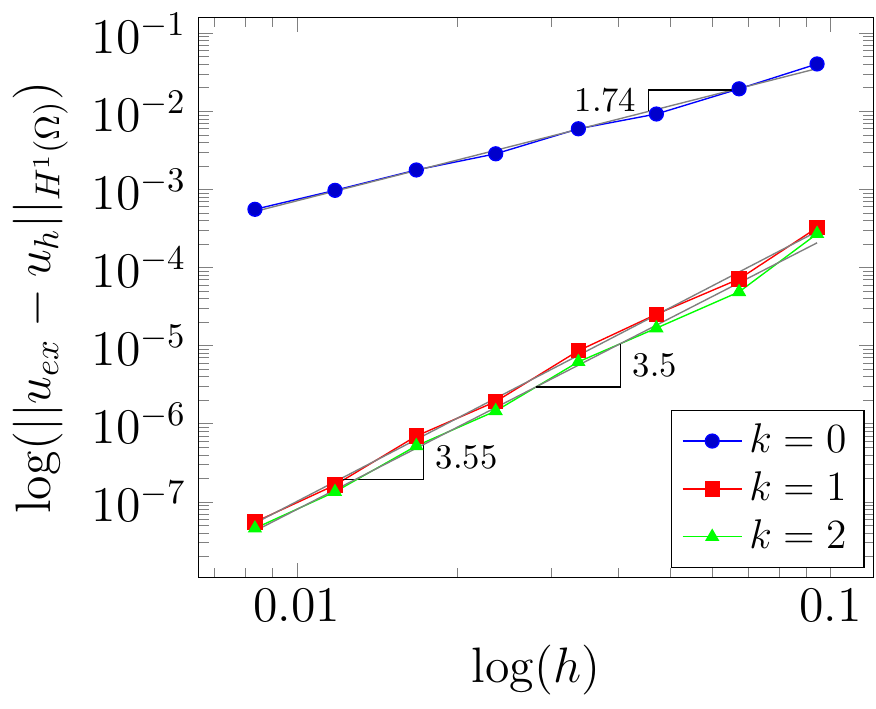}\label{subfig: p3h1flower}} \\
\subfloat[a][$p=2$]{
\includegraphics[width=0.5\textwidth]{./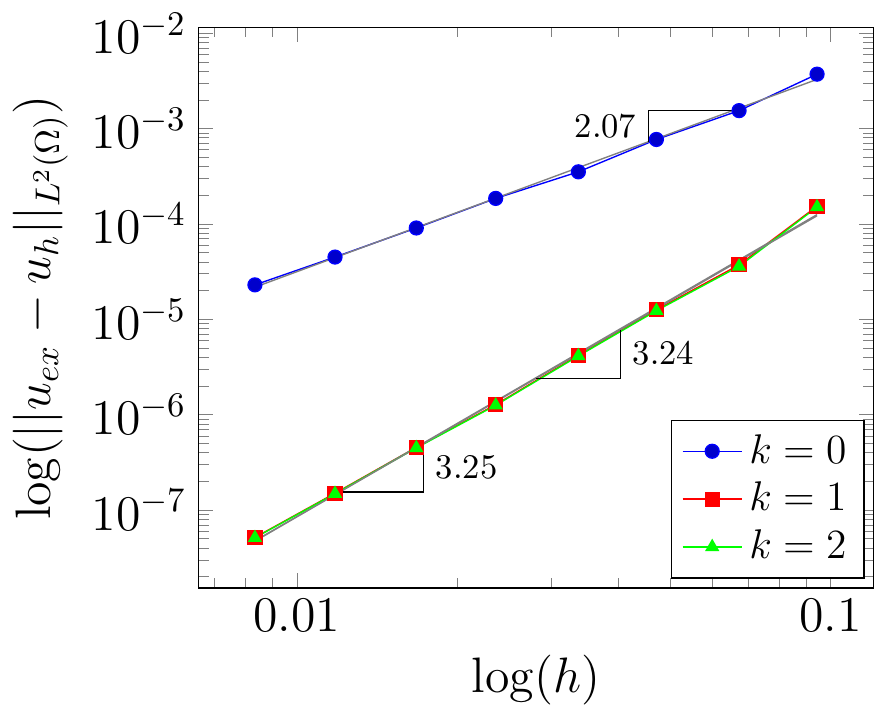}\label{subfig: p2l2flower}}
\subfloat[a][$p=3$]{\includegraphics[width=0.5\textwidth]{./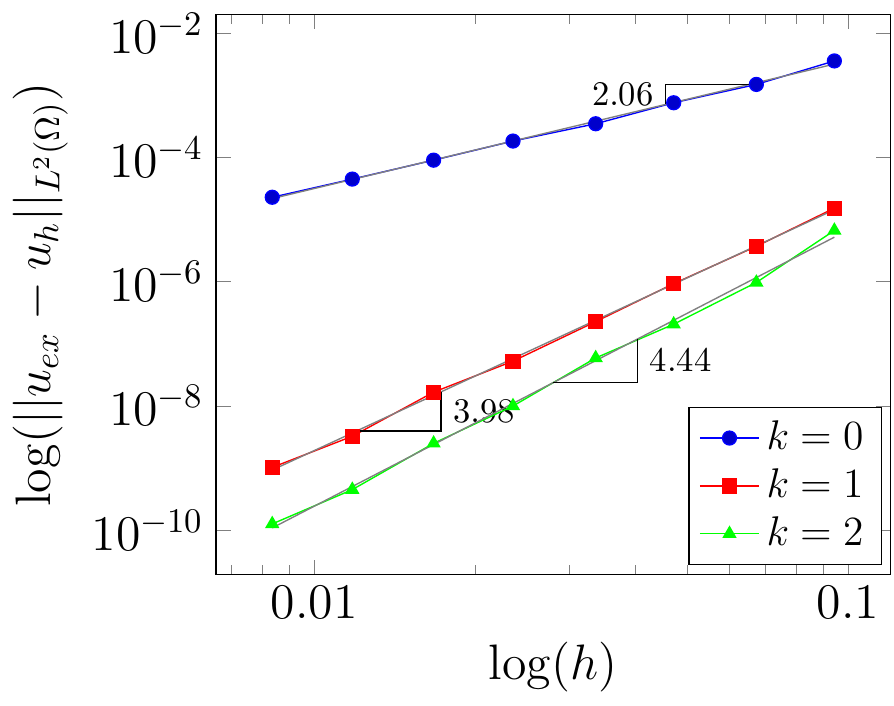}\label{subfig: p3l2flower}} \\
\caption{Convergence rates for the reference solution in the flower shaped domain in the $H_1$ norm for \protect\subref{subfig: p2h1flower} $p=2$, \protect\subref{subfig: p3h1flower}, $p=3$ for Taylor expansions of order $k=0,1,2$ and in the $L_2$ norm for \protect\subref{subfig: p2l2flower} $p=2$,\protect\subref{subfig: p3l2flower} $p=3$.}
\label{fig: flowerconvergence}
\end{figure}

\subsection{3D Solution in an Ellipsoid}
\begin{figure}[th]
\includegraphics[width=0.6\textwidth]{./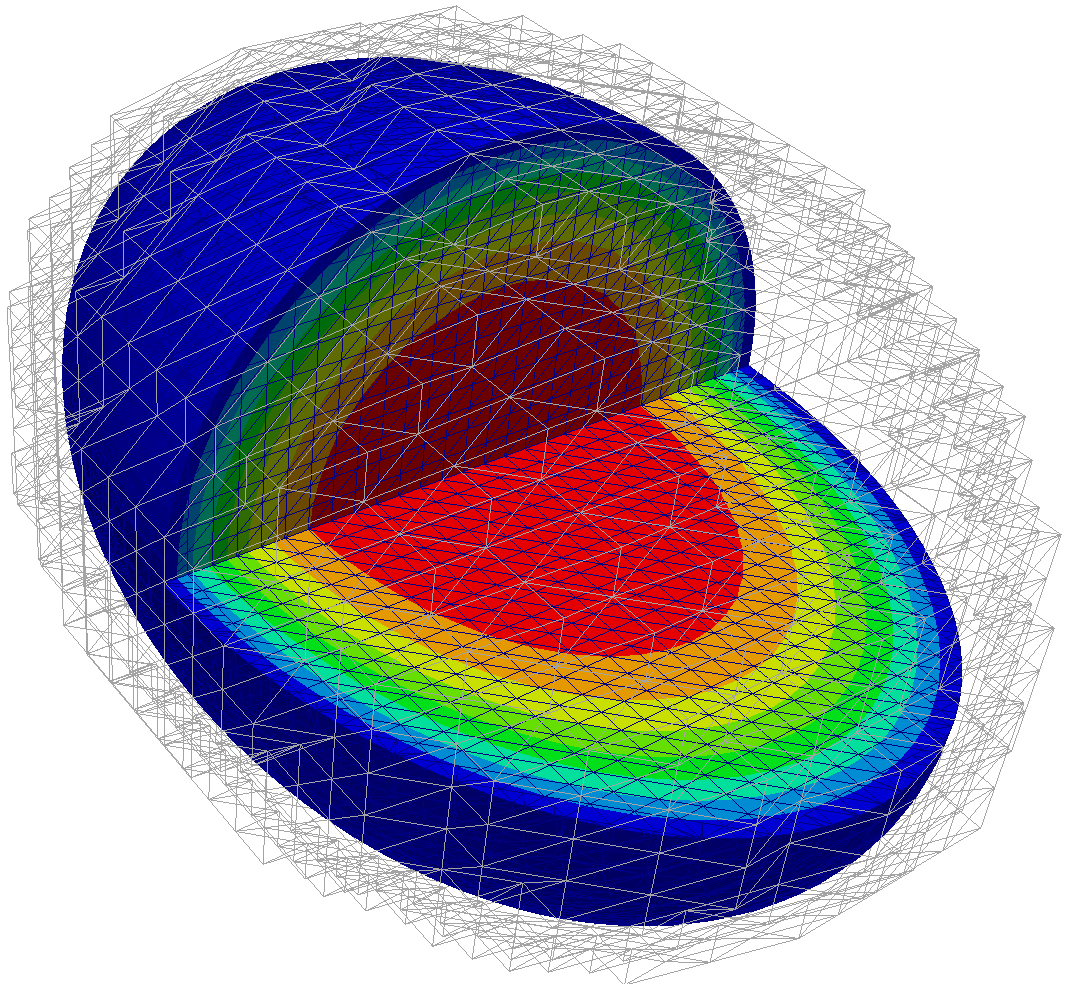}
\includegraphics[width=0.2\textwidth]{./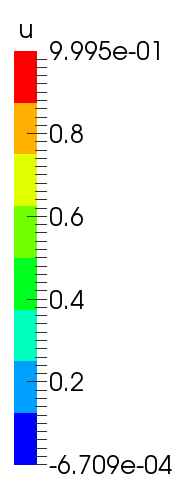}
\caption{Reference solution $u$ in a 3D ellipsoid.}
\label{fig: ref ellipsoid sol}
\end{figure}
We compute the reference solution 
\begin{equation}
u = \cos(\pi \frac{\hat{r}^2}{2})
\end{equation}
with 
\begin{equation}
\hat{r} = \sqrt{\frac{x^2}{(3.0/4.0)^2}+\frac{y^2}{(1.0/2.0)^2}+\frac{z^2}{(1.0/2.0)^2}}
\end{equation}
in a 3D ellipsoid given by the function 
\begin{equation}
 \phi = \hat{r}-1.
\end{equation}   
Figure~\ref{fig: ref ellipsoid sol} shows the solution in the ellipsoid and Figure~\ref{fig: ellipsoidconvergence} shows the convergence for the solution for $p=2$ and $k=1$ demonstrating the optimal convergence rate of the numerical solution as predicted by the estimates of the previous section. 
\begin{figure}[h!]
\centering
{
\includegraphics[width=0.5\textwidth]{./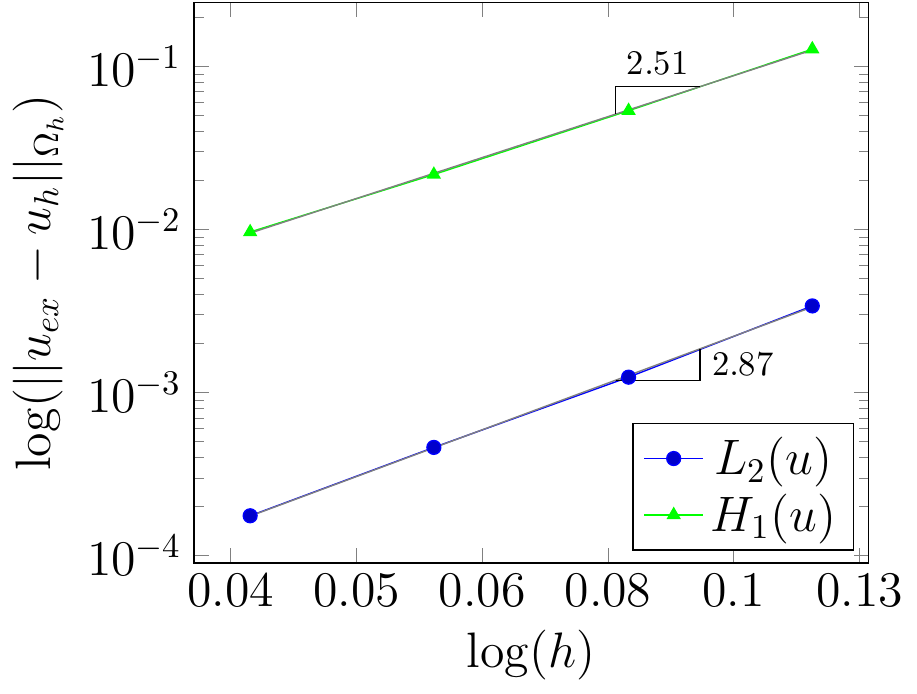}\label{subfig: p2h1ellipsoid3D}}
\caption{Error for $p=2$ in ellipsoid for $k=1$.}
\label{fig: ellipsoidconvergence}
\end{figure} 

%
%

	\bibliography{Bibliography}
\end{document}